\newcommand{\e}{\ensuremath{{\rm e}}}
\newcommand{\mat}[1]{\ensuremath{\bm{#1}}}
\newcommand{\E}{\ensuremath{\mathbb{E}}}
\newcommand{\Prob}[1]{\ensuremath{\mathbb{P}\left(#1\right)}}
\newcommand{\R}{\ensuremath{\mathbb{R}}}
\newcommand{\supoverinfball}{\ensuremath{\max_{\infnorm{\mat{u}} = 1}}}
\newcommand{\supovertball}{\ensuremath{\max_{\norm{\mat{u}}_2 = 1}}}
\newcommand{\supoverqball}{\ensuremath{\max_{\qnorm{\mat{u}} = 1}}}
\newcommand{\norm}[1]{\ensuremath{\left\|#1\right\|}}
\newcommand{\pnorm}[1]{\ensuremath{\left\|#1\right\|_p}}
\newcommand{\qnorm}[1]{\ensuremath{\left\|#1\right\|_q}}
\newcommand{\infnorm}[1]{\ensuremath{\left\|#1\right\|_\infty}}
\newcommand{\infonorm}[1]{\ensuremath{\left\|#1\right\|_{\infty\rightarrow 1}}}
\newcommand{\inftnorm}[1]{\ensuremath{\left\|#1\right\|_{\infty\rightarrow 2}}}
\newcommand{\twoinfnorm}[1]{\ensuremath{\left\|#1\right\|_{2 \rightarrow \infty}}}
\newcommand{\infone}{\ensuremath{\infty\!\rightarrow\!\!1}}
\newcommand{\inftwo}{\ensuremath{\infty\!\rightarrow\!\!2}}
\newcommand{\infp}{\ensuremath{\infty\!\rightarrow\!p}}
\newcommand{\twoinf}{\ensuremath{2\!\!\rightarrow\!\!\infty}}
\newcommand{\colnorm}[1]{\ensuremath{\left\|#1\right\|_{\text{\rm col}}}}
\newcommand{\cutnorm}[1]{\ensuremath{\left\|#1\right\|_{\text{\rm C}}}}
\newcommand{\frobnorm}[1]{\ensuremath{\left\|#1\right\|_{\text{\rm F}}}}
\newcommand{\asymO}[1]{\ensuremath{\mathop{\text{\rm O}}\!\left(#1\right)}}
\newcommand{\var}[1]{\ensuremath{\mathrm{Var}(#1)}}
\DeclareMathOperator{\sgn}{sgn}
\DeclareMathOperator{\trace}{trace}
\newcommand{\eqqcolon}{=\mathrel{\mathop:}} 
\newtheorem{thm}{Theorem}
\newtheorem{cor}{Corollary}
\newtheorem{prop}{Proposition}
\begin{document}
\title[Error bounds for random matrix approximation]{Error bounds for random matrix approximation schemes}

\author{A.~Gittens \and J.~A.~Tropp}

\begin{abstract}
Randomized matrix sparsification has proven to be a fruitful technique for producing faster algorithms in applications ranging from graph partitioning to semidefinite programming. 
In the decade or so of research into this technique, the focus has been---with few exceptions---on ensuring the quality of approximation in the spectral and Frobenius norms. For certain graph algorithms, however, the $\infone$ norm may be a more natural measure of performance. 

This paper addresses the problem of approximating a real matrix $\mat{A}$ by a sparse random matrix $\mat{X}$ with respect to several norms. It provides the first results on approximation error in the $\infone$ and $\inftwo$ norms, and it uses a result of Lata\l{}a to study approximation error in the spectral norm. These bounds hold for a reasonable family of random sparsification schemes, those which ensure that the entries of $\mat{X}$ are independent and average to the corresponding entries of $\mat{A}$. Optimality of the $\infone$ and $\inftwo$ error estimates is established. Concentration results for the three norms hold when the entries of $\mat{X}$ are uniformly bounded. The spectral error bound is used to predict the performance of several sparsification and quantization schemes that have appeared in the literature; the results are competitive with the performance guarantees given by earlier scheme-specific analyses.
\end{abstract}
\maketitle

\section{Introduction}

Massive datasets are ubiquitous in modern data processing. Classical dense matrix algorithms are poorly suited to such problems because their running times scale superlinearly with the size of the matrix. When the dataset is sparse, one prefers to use sparse matrix algorithms, whose running times depend more on the sparsity of the matrix than on the size of the matrix. Of course, in many applications the matrix is \emph{not} sparse. Accordingly, one may wonder whether it is possible to approximate a computation on a large dense matrix with a related computation on a sparse approximant to the matrix.

Let $\|\cdot\|$ be a norm on matrices. We may phrase the following question: Given a matrix $\mat{A}$, how can one efficiently generate a sparse matrix $\mat{X}$ for which the approximation error $\|\mat{A} - \mat{X}\|$ is small? 

In the seminal papers \cite{AM07,AM01}, Achlioptas and McSherry demonstrate that one can bound, \emph{a priori}, the spectral and Frobenius norm errors incurred when using a particular random approximation scheme. They use this scheme as the basis of an efficient algorithm for calculating near optimal low-rank approximations to large matrices. In a related work \cite{AHK06}, Arora, Hazan, and Kale present another randomized scheme for computing sparse approximants with controlled Frobenius and spectral norm errors.

The literature has concentrated on the behavior of the approximation error in the spectral and Frobenius norms; however, these norms are not always the most natural choice. For instance, the problem of graph sparsification is naturally posed as a question of preserving the cut-norm of a graph Laplacian. The strong equivalency of the cut-norm and the $\infone$ norm suggests that, for graph-theoretic applications, it may be fruitful to consider the behavior of the $\infone$ norm under sparsification. In other applications, e.g., the column subset selection algorithm in \cite{Tropp09}, the $\inftwo$ norm is the norm of interest.

This paper investigates the errors incurred by approximating a fixed real matrix with a random matrix. Our results apply to any scheme in which the entries of the approximating matrix are independent and average to the corresponding entries of the fixed matrix. Our main contribution is a bound on the expected $\infp$ norm error, which we specialize to the case of the $\infone$ and $\inftwo$ norms. We also use a result of Lata{\l}a \cite{Lat04} to find an optimal bound on the expected spectral approximation error, and we establish the subgaussianity of the spectral approximation error.

\subsection{Summary of results}
Consider the matrix $\mat{A}$ as an operator from $\ell^n_\infty$ to $\ell^m_p$, where $1 \leq p \leq \infty$. The operator norm of $\mat{A}$ is defined as 
\[
\|\mat{A}\|_{\infty \rightarrow p} = \max_{\mat{u} \neq \mathbf{0}} \frac{\|\mat{A}\mat{u}\|_p}{\|\mat{u}\|_\infty}
.\]

The major novel contributions of this paper are estimates of the $\infone$ and $\inftwo$ norm approximation errors induced by random matrix sparsification schemes. Let $\mat{A}$ be a target matrix. Suppose that $\mat{X}$ is a random matrix with independent entries that satisfies $\E \mat{X} = \mat{A}.$ Then
\[
\E\norm{\mat{A} - \mat{X}}_{\infty \rightarrow 1} \leq 2\left[\sum\nolimits_k\left(\sum\nolimits_j \var{X_{jk}} \right)^{1/2} + \sum\nolimits_j\left(\sum\nolimits_k \var{X_{jk}}\right)^{1/2} \right]
\] 
and
\[
\E\norm{\mat{A} - \mat{X}}_{\infty \rightarrow 2} \leq 2\left(\sum\nolimits_{jk} \var{X_{jk}} \right)^{1/2} + 2\sqrt{m}\min_{\mat{D}}\max_j\left(\sum\nolimits_k \frac{\var{X_{jk}}}{d_k^2}\right)^{1/2}
\]
where $\mat{D}$ is a positive diagonal matrix with $\trace (\mat{D}^2) = 1.$ We complement these estimates with tail bounds for the approximation error in each norm.

We also note that 
\[
\E\norm{\mat{A} - \mat{X}} \leq {\rm C}\left[\max_j\left(\sum\nolimits_k \var{X_{jk}}\right)^{1/2} + \max_k\left(\sum\nolimits_j \var{X_{jk}}\right)^{1/2} + \left(\sum\nolimits_{jk} \E(X_{jk} - a_{jk})^4\right)^{1/4} \right]
\]
where ${\rm C}$ is a universal constant. This estimate follows from Lata{\l}a's work \cite{Lat04}. We use our spectral norm results to analyze the sparsification and quantization schemes in \cite{AHK06} and \cite{AM07}, and we show that our analysis yields error estimates competitive with those established in the respective works.

While on the path to proving the $\infone$ and $\inftwo$ norm approximation error estimates, we derive analogous relations of independent interest that bound the expected norm of a random matrix $\mat{Z}$ with independent, zero-mean entries:
\[
\E\norm{\mat{Z}}_{\infty \rightarrow 1} \leq 2\E\left(\colnorm{\mat{Z}} + \colnorm{\mat{Z}^T}\right),
\]
where $\colnorm{\mat{A}}$ is the sum of the $\ell_2$ norms of the columns of $\mat{A}$, and
\[
\E\norm{\mat{Z}}_{\infty \rightarrow 2} \leq 2\E\frobnorm{\mat{Z}} + 2\min_{\mat{D}}\E\norm{\mat{Z}\mat{D}^{-1}}_{2\rightarrow \infty},
\]
where $\mat{D}$ is a diagonal positive matrix with $\trace(\mat{D}^2) = 1$. More generally,
\[
\E\norm{\mat{Z}}_{\infty\rightarrow p} \leq 2\E\norm{\sum\nolimits_k \varepsilon_k\mat{z}_k}_p + 2\supoverqball \E\sum\nolimits_k\left|\sum\nolimits_j \varepsilon_j Z_{jk} u_j\right|,
\]
where $q$ is the conjugate exponent to $p$ and $\mat{z}_k$ is the $k$th column of $\mat{Z}$. Here, $\{\varepsilon_k\}$ is a sequence of independent random variables uniform on $\{\pm 1\}.$

\subsection{Outline}

Section \ref{sec:literature} offers an overview of the related strands of research, and Section \ref{sec:background} introduces the reader to our notations. In Section \ref{sec:inftopbound}, we establish the foundations for our $\infone$ and $\inftwo$ error estimates: an estimate of the expected $\infp$ norm of a random matrix with independent, zero-mean entries and a tail bound for this quantity. In Sections \ref{sec:inf1norm} and \ref{sec:inf2norm}, these estimates are specialized to the cases $p=1$ and $p=2$, respectively, and we establish the optimality of the resulting expressions. The bounds are provided in both their most generic forms and ones more suitable for applications. In Section \ref{sec:spectralerrbound}, we use a result due to Lata\l{}a \cite{Lat04} to find an optimal estimate of the spectral approximation error. A deviation bound is provided for the spectral norm which captures the correct (subgaussian) tail behavior. We show that our estimates applied to the sparsification schemes in \cite{AHK06} and \cite{AM07} recover performance guarantees comparable with those obtained from the original analyses, under slightly weaker hypotheses.

\section{Related Work}
\label{sec:literature}

In recent years, much attention has been paid to the problem of using sampling methods to approximate linear algebra computations efficiently. Such algorithms find applications in areas like data mining, computational biology, and other areas where the relevant matrices may be too large
to fit in RAM, or large enough that the computational requirements of standard algorithms become prohibitive. Here we review the streams of literature that have motivated and influenced this work.

\subsection{Randomized sparsification}
The seminal research of Frieze, Kannan, and Vempala \cite{FKV98,FKV04} on using random sampling to decrease the running time of linear algebra algorithms focuses on approximations to $\mat{A}$ constructed from random subsets of its rows. The subsequent influential works of Drineas, Kannan, and Mahoney \cite{DKM06a,DKM06b, DKM06c} also analyze the performance of Monte Carlo algorithms which sample from the columns or rows of $\mat{A}$. 

In \cite{AM01}, Achlioptas and McSherry advance a different approach: instead of using low-rank approximants, they sample the entries of $\mat{A}$ to produce a sparse matrix $\mat{X}$ that has a spectral decomposition close to that of $\mat{A}$. This transition from row/column sampling to independent sampling of the entries allows them to bring to bear powerful techniques from random matrix theory. In \cite{AM01}, their main tool is a result on the concentration of the spectral norm of a random matrix with independent, zero-mean, bounded entries. In a follow-up paper, \cite{AM07}, they obtain better estimates by using a sharper concentration result derived from Talagrand's inequality. They point out the importance of using schemes which sparsify a given entry in $\mat{A}$ with a probability proportional to its magnitude. Such adaptive sparsification schemes keep the variance of the individual entries small, which tends to keep the error in approximating $\mat{A}$ with $\mat{X}$ small. Their scheme requires either two passes through the matrix or prior knowledge of an upper bound for the largest magnitude in $\mat{A}$.

In \cite{AHK06}, Arora, Hazan, and Kale describe a random sparsification algorithm which partially quantizes its inputs and requires only one pass through the matrix. They use an epsilon-net argument and Chernoff bounds to establish that with high probability the resulting approximant has small error and high sparsity. 

\subsection{Probability in Banach spaces}
In \cite{RV07}, Rudelson and Vershynin take a different approach to the Monte Carlo methodology for low-rank approximation. They consider $\mat{A}$ as a linear operator between finite-dimensional Banach spaces and apply techniques of probability in Banach spaces: decoupling, symmetrization, Slepian's lemma for Rademacher random variables, and a law of large numbers for operator-valued random variables. They show that, if $\mat{A}$ can be approximated by any rank-$r$ matrix, then it is possible to obtain an accurate rank-$k$ approximation to $\mat{A}$ by sampling $\asymO{r\log r}$ rows of $\mat{A}$. Additionally, they quantify the behavior of the $\infone$ and $\inftwo$ norms of random submatrices.

Our methods are similar to those of Rudelson and Vershynin in \cite{RV07} in that we consider $\mat{A}$ as a linear operator between finite-dimensional Banach spaces and use some of the same tools of probability in Banach spaces. Whereas Rudelson and Vershynin consider the behavior of the norms of random submatrices of $\mat{A}$, we consider the behavior of the norms of matrices formed by randomly sparsifying (or quantizing) the entries of $\mat{A}$. This yields error bounds applicable to schemes that sparsify or quantize matrices entrywise. Since some graph algorithms depend more on the number of edges in the graph than the number of vertices, such schemes may be useful in developing algorithms for handling large graphs.

\subsection{Random sampling of graphs}
The bulk of the literature has focused on the behavior of the spectral and Frobenius norms under randomized sparsification, but the $\infone$ norm occurs naturally in connection with graph theory. Let us review the relevant ideas.

Consider a weighted simple graph $G=(V,E,\omega)$ with adjacency matrix
$\mat{A}$ given by 
\[
a_{jk}=\begin{cases}
\omega_{jk}, & (j,k)\in E\\
0, & \text{ otherwise}.\end{cases}
\]
A \emph{cut} is a partition of the vertices into two blocks: $V=S\cup\overline{S}$. The \emph{cost} of a cut is the sum of the weights of all edges in $E$ which have one vertex in $S$ and one vertex in
$\overline{S}$.
Several problems relating to cuts are of considerable practical interest. In particular, the \textsc{maxcut} problem, to determine the cut of maximum cost in a graph, is common in computer science applications. The cuts of maximum cost are exactly those which realize the \emph{cut-norm} of the adjacency matrix, which is defined as 
\[
\cutnorm{\mat{A}}=\max_{S\subset E}\left|\sum\nolimits_{(j,k)\in E}\omega_{jk}(\mathbb{1}_{S})_{j}(\mathbb{1}_{\overline{S}})_{k}\right|,
\]
where $\mathbb{1}_{S}$ is the indicator vector for $S$. Finding the cut-norm of a general matrix is {\sf NP}-hard, but in \cite{AN04}, the authors offer a randomized polynomial-time algorithm which finds a submatrix $\tilde{\mat{A}}$ of $\mat{A}$ such that $|\sum_{jk}\tilde{a}_{jk}|\geq0.56\cutnorm{\mat{A}}$. One crucial point in the derivation of the algorithm is the fact that
the $\infone$ norm is strongly equivalent with the cut-norm: 
\[
\cutnorm{\mat{A}}\leq\infonorm{\mat{A}}\leq4\cutnorm{\mat{A}}.
\]

In his thesis \cite{Kar95th} and the sequence of papers \cite{Kar94a,Kar94b,Kar96}, Karger introduces the idea of random sampling to increase the efficiency of calculations with graphs, with a focus on cuts. In \cite{Kar96}, he shows that by picking each edge of the graph with a probability inversely proportional to the density of edges in a neighborhood of that edge, one can construct a \emph{sparsifier}, i.e., a graph with the same vertex set and significantly fewer edges that preserves the value of each cut to within a factor of $(1\pm\epsilon)$.

In \cite{SS08}, Spielman and Srivastava improve upon this sampling scheme, instead keeping an edge with probability proportional to its \emph{effective resistance}---a measure of how likely it is to appear in a random spanning tree of the graph. They provide an algorithm which produces a sparsifier with $\asymO{(n\log n)/\epsilon^{2}}$ edges, where $n$ is the number of vertices in the graph. They obtain this result by reducing the problem to the behavior of projection matrices $\Pi_{G}$ and $\Pi_{G^{\prime}}$ associated with the original graph and the sparsifier, and appealing to a spectral norm concentration result.

The $\log n$ factor in \cite{SS08} seems to be an unavoidable consequence of using spectral norm concentration. In \cite{BSS08}, Batson et. al. prove that the $\log n$ factor is not intrinsic: they establish that every graph has a sparsifier that has $\asymO{n}$ edges. The proof is constructive and provides a deterministic algorithm for constructing such optimal sparsifiers in $\asymO{n^3m}$ time, where $m$ is the number of edges in the original graph.

The algorithm of \cite{BSS08} is clearly not suitable for sparsifying graphs with a large number of vertices. Part of our motivation for investigating the $\infone$ approximation error is the belief that the equivalence of the cut-norm with the $\infone$ norm means that matrix sparsification in the $\infone$ norm might be useful for efficiently constructing optimal sparsifiers for such graphs.

\section{Background}
\label{sec:background} 

We establish the notation used in the sequel and introduce our key technical tools.

All quantities are real. The $k$th column of the matrix $\mat{A}$ is denoted by $\mat{a}_{k}$ and the entries are denoted $a_{jk}.$

For $1\leq p\leq\infty$, the $\ell_p$ norm of $\mat{x}$ is written as $\pnorm{\mat{x}}$. We treat $\mat{A}$ as an operator from $\ell_{p}^{n}$ to $\ell_{q}^{m}$, and the $p\rightarrow q$ operator norm of $\mat{A}$ is written as $\norm{\mat{A}}_{p\rightarrow q}$. The $p\rightarrow q$ and $q^{\prime} \rightarrow p^{\prime}$ norms are dual in the sense that 
\[
\norm{\mat{A}}_{p\rightarrow q}=\norm{\mat{A}^{T}}_{q^{\prime}\rightarrow p^{\prime}}.
\]

This paper is concerned primarily with the spectral norm and the $\infone$ and $\inftwo$ norms. The spectral norm $\norm{\mat{A}}$ is the largest singular value of $\mat{A}$. The $\infone$ and $\inftwo$ norms do not have such nice interpretations, and they are {\sf NP}-hard to compute for general matrices \cite{Rohn00}. We remark that $\infonorm{\mat{A}}=\norm{\mat{A}\mat{x}}_{1}$ and $\inftnorm{\mat{A}}=\norm{\mat{A}\mat{y}}_{2}$ for certain vectors $\mat{x}$ and $\mat{y}$ whose components take values $\pm1$.

An additional operator norm, the $\twoinf$ norm, is also of interest: it is the largest $\ell_{2}$ norm achieved by a row of $\mat{A}$. We encounter two norms in the sequel that are not operator norms: the Frobenius norm, denoted by $\frobnorm{\mat{A}}$, and the column norm 
\[
\colnorm{\mat{A}}=\sum\nolimits_k\norm{\mat{a}_{k}}_{2}.
\]

The expectation of a random variable $X$ is written $\E X$ and its variance is written $\var{X} = \E(X-\E X)^2$. The expectation taken with respect to one variable $X$, with all others fixed, is written $\E_X$. The $L_q$ norm of $X$ is denoted by $\E^{q}(X)=(\E|X|^{q})^{1/q}.$ 

The expression $X \sim Y$ indicates the random variables $X$ and $Y$ are identically distributed. Given a random variable $X$, the symbol $X^\prime$ denotes a random variable independent of $X$ such that $X^\prime \sim X$. The indicator variable of the event $X > Y$ is written $\mathbb{1}_{X > Y}$.

The Bernoulli distribution with expectation $p$ is written $\text{Bern}(p)$ and the Binomial distribution of $n$ independent trials each with success probability $p$ is written $\text{Bin}(n,p)$. We write $X \sim \text{Bern}(p)$ to indicate $X$ is Bernoulli.

A Rademacher random variable takes on the values $\pm1$ with equal probability. A vector whose components are independent Rademacher variables is called a Rademacher vector. A real sum whose terms are weighted by independent Rademacher variables is called a Rademacher sum. The Khintchine inequality \cite{Szarek78} gives information on the moments of a Rademacher sum; in particular, it tells us the expected value of the sum is equivalent with the $\ell_2$ norm of the vector $\mat{x}$:

\begin{prop}[Khintchine inequality]
Let $\mat{x}$ be a real vector, and let $\boldsymbol{\varepsilon}$ be a Rademacher vector. Then 
\[
\frac{1}{\sqrt{2}}\norm{\mat{x}}_{2}\leq\E\left|\sum\nolimits_{k}\varepsilon_{k}x_{k}\right|\leq\norm{\mat{x}}_{2}.
\]
\end{prop}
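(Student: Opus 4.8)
For the upper bound I would argue directly: the Rademacher variables are orthonormal in $L_2$, so $\E\bigl(\sum_k\varepsilon_k x_k\bigr)^2=\sum_k x_k^2=\norm{\mat{x}}_2^2$, and Jensen's inequality for the convex map $t\mapsto t^2$ (equivalently, Cauchy--Schwarz between the $L_1$ and $L_2$ norms) gives $\E\bigl|\sum_k\varepsilon_k x_k\bigr|\le\bigl(\E\bigl|\sum_k\varepsilon_k x_k\bigr|^2\bigr)^{1/2}=\norm{\mat{x}}_2$.

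The lower bound is the substantive half. Writing $S=\sum_k\varepsilon_k x_k$, my plan is to control $S$ in $L_1$ by interpolating between its second and fourth moments. Expanding $\E S^4=\sum_{i,j,k,\ell}x_ix_jx_kx_\ell\,\E[\varepsilon_i\varepsilon_j\varepsilon_k\varepsilon_\ell]$ and keeping only the terms whose indices pair off leaves $\E S^4=3\bigl(\sum_k x_k^2\bigr)^2-2\sum_k x_k^4\le 3\norm{\mat{x}}_2^4$. Log-convexity (Lyapunov) of the $L_p$ norms then gives $\norm{S}_2\le\norm{S}_1^{1/3}\norm{S}_4^{2/3}$; substituting $\norm{S}_2=\norm{\mat{x}}_2$ and $\norm{S}_4\le 3^{1/4}\norm{\mat{x}}_2$ and rearranging yields $\E|S|\ge 3^{-1/2}\norm{\mat{x}}_2$. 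This already establishes a Khintchine inequality, but with the suboptimal left-hand constant $1/\sqrt3$.

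Reaching the sharp constant $1/\sqrt2$ in the statement is the real work, and no such moment comparison will get there: the extremal vector is $\mat{x}=(1,1)$, where $\E|\varepsilon_1+\varepsilon_2|=1=\tfrac{1}{\sqrt2}\norm{(1,1)}_2$, so the proof must certify that this two-atom configuration is worst over all $\mat{x}\in\R^n$ and all $n$. For this I would invoke Szarek's theorem \cite{Szarek78} directly: a convexity argument together with a compactness reduction shows that the infimum of $\E|S|/\norm{\mat{x}}_2$ over unit vectors is attained, and a careful analysis of the extremizer identifies it as $1/\sqrt2$. I expect that optimality step to be the main obstacle---it is genuinely combinatorial/analytic and does not reduce to a routine estimate---so in practice I would record the elementary $1/\sqrt3$ bound above and cite \cite{Szarek78} for the sharp constant asserted in the proposition.
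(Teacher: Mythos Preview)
Your proposal is correct, and in fact it goes beyond what the paper does: the paper states this proposition as a known result and simply cites \cite{Szarek78}, offering no proof of its own. Both halves of your argument are sound---the upper bound via $\E|S|\le(\E S^2)^{1/2}=\norm{\mat{x}}_2$ is immediate, and the Paley--Zygmund style interpolation $\norm{S}_2\le\norm{S}_1^{1/3}\norm{S}_4^{2/3}$ together with the exact fourth-moment expansion $\E S^4=3\norm{\mat{x}}_2^4-2\sum_kx_k^4\le3\norm{\mat{x}}_2^4$ legitimately yields $\E|S|\ge3^{-1/2}\norm{\mat{x}}_2$. You are also right that no such moment comparison can reach the sharp constant $1/\sqrt{2}$, that the extremizer is the two-coordinate vector, and that one ultimately defers to Szarek for the optimal bound. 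So your write-up is strictly more informative than the paper's treatment, which is purely a citation; the only caveat is that the proposition as stated asserts the sharp constant, and your self-contained portion delivers only $1/\sqrt{3}$, with the rest still resting on \cite{Szarek78}---exactly as the paper itself does.
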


\section{The $\infp$ norm of a Random Matrix}
\label{sec:inftopbound}

We are interested in schemes that approximate a given matrix $\mat{A}$ by means of a random matrix $\mat{X}$ in such a way that the entries of $\mat{X}$ are independent and $\E\mat{X} = \mat{A}$. It follows that the error matrix $\mat{Z}=\mat{A}-\mat{X}$ has independent, zero-mean entries. Our intellectual concern is the class of sparse random matrices, but this property does not play a role at this stage of the analysis.

In this section, we derive a bound on the expected value of the $\infp$ norm of a random matrix with independent, zero-mean entries. We also study the tails of this error. In the next two sections, we use the results of this section to reach more detailed conclusions on the $\infone$ and $\inftwo$ norms of $\mat{Z}$.

\subsection{Expected $\infp$ norm} The main tool used to derive the bound on the expected norm of $\mat{Z}$ is the following symmetrization argument \cite[Lemma 2.3.1 et seq.]{vdVW}.

\begin{prop}
Let $Z_1, \ldots, Z_n, Z_1^\prime, \ldots, Z_n^\prime$ be independent random variables satisfying $Z_i \sim Z_i^\prime$, and let $\boldsymbol{\varepsilon}$ be a Rademacher vector. Let $\mathcal{F}$ be a family of functions such that 
\[
\sup_{f \in \mathcal{F}} \sum\nolimits_{k=1}^n (f(Z_k) - f(Z_k^\prime))
\]
is measurable. Then 
\[
\E \sup_{f \in \mathcal{F}} \sum\nolimits_{k=1}^n (f(Z_k) - f(Z_k^\prime)) = \E \sup_{f \in \mathcal{F}} \sum\nolimits_{k=1}^n \varepsilon_k (f(Z_k) - f(Z_k^\prime)).
\]
\label{prop:symmetrization}
\end{prop}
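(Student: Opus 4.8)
The plan is to exploit the distributional symmetry of the difference $Z_k - Z_k^\prime$. The key observation is that, because $Z_k$ and $Z_k^\prime$ are i.i.d., the pair $(Z_k, Z_k^\prime)$ has the same joint distribution as the swapped pair $(Z_k^\prime, Z_k)$. Consequently, for any fixed choice of signs $\varepsilon_k \in \{\pm 1\}$, the vector of differences $\bigl(\varepsilon_k(f(Z_k) - f(Z_k^\prime))\bigr)_{k}$ --- viewed as a random function of the data, indexed by $f \in \mathcal{F}$ --- has exactly the same distribution as $\bigl(f(Z_k) - f(Z_k^\prime)\bigr)_k$: flipping the sign $\varepsilon_k$ simply interchanges the roles of $Z_k$ and $Z_k^\prime$ in the $k$th coordinate, which is a measure-preserving operation on the underlying probability space.

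First I would make this precise. Fix a sign vector $\boldsymbol{\varepsilon} \in \{\pm 1\}^n$. Define the map on the sample space that, for each $k$ with $\varepsilon_k = -1$, swaps $Z_k$ and $Z_k^\prime$ (and leaves the others alone); by independence and $Z_k \sim Z_k^\prime$ this map pushes the joint law of $(Z_1, \ldots, Z_n, Z_1^\prime, \ldots, Z_n^\prime)$ forward to itself. Under this map, $f(Z_k) - f(Z_k^\prime)$ becomes $\varepsilon_k(f(Z_k) - f(Z_k^\prime))$ for every $k$ simultaneously and every $f$, and the supremum over $\mathcal{F}$ (which is measurable by hypothesis) is preserved. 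Hence
\[
\E \sup_{f \in \mathcal{F}} \sum\nolimits_{k=1}^n (f(Z_k) - f(Z_k^\prime)) = \E \sup_{f \in \mathcal{F}} \sum\nolimits_{k=1}^n \varepsilon_k(f(Z_k) - f(Z_k^\prime))
\]
for each \emph{fixed} $\boldsymbol{\varepsilon}$.

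The final step is to average over $\boldsymbol{\varepsilon}$. Since the displayed identity holds for every deterministic $\boldsymbol{\varepsilon} \in \{\pm 1\}^n$, it holds in particular when $\boldsymbol{\varepsilon}$ is an independent Rademacher vector: condition on $\boldsymbol{\varepsilon}$, apply the identity pointwise, and take the expectation over $\boldsymbol{\varepsilon}$ using Fubini/Tonelli (the integrand is, say, bounded below after the usual truncation, or one simply works with the convention that both sides may be $+\infty$ together). The right-hand side of the conditional identity does not depend on $\boldsymbol{\varepsilon}$, so averaging leaves the left-hand side unchanged and produces exactly the claimed equation.

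I expect the only real subtlety to be measurability and integrability bookkeeping: one must know that $\sup_{f \in \mathcal{F}} \sum_k \varepsilon_k(f(Z_k) - f(Z_k^\prime))$ is measurable jointly in the data and $\boldsymbol{\varepsilon}$ so that the conditioning-and-Fubini step is legitimate --- but this follows from the hypothesized measurability of the unsigned supremum together with the fact that $\boldsymbol{\varepsilon}$ ranges over a finite set, so each of the $2^n$ sign patterns can be handled separately and the overall expression is a finite combination of measurable functions. No further tools beyond the i.i.d.\ symmetry of $(Z_k, Z_k^\prime)$ are needed; in particular, the argument does not require any integrability of the $Z_k$ themselves beyond what is implicit in the statement. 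A reference for this standard fact is \cite[Lemma 2.3.1 et seq.]{vdVW}.
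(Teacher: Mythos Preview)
Your argument is correct and is exactly the standard proof of this symmetrization identity. Note, however, that the paper does not supply its own proof of this proposition: it simply states the result and cites \cite[Lemma 2.3.1 et seq.]{vdVW}, so there is nothing in the paper to compare against beyond that reference. Your write-up matches the argument one finds there.
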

Since we work with finite-dimensional probability models and linear functions, measurability concerns can be ignored.

The other critical tool is a version of Talagrand's Rademacher comparison theorem \cite[Theorem 4.12 et seq.]{LT91}.

\begin{prop}
Fix finite-dimensional vectors $\mat{z}_{1},\ldots,\mat{z}_{n}$ and let
$\boldsymbol{\varepsilon}$ be a Rademacher vector. Then \[
\E\supoverqball\sum\nolimits_{k=1}^{n}\varepsilon_k|\langle\mat{z}_{k},\mat{u}\rangle|\leq\E\supoverqball\sum\nolimits_{k=1}^{n}\varepsilon_{k}\langle\mat{z}_{k},\mat{u}\rangle.\]
 \label{prop:comparison}
\end{prop}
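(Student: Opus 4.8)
The plan is to read the left-hand side through Talagrand's contraction principle, taking the one-dimensional functions to be the absolute-value maps $\varphi_k(t)=|t|$. Each $\varphi_k$ is $1$-Lipschitz and vanishes at the origin, and the points $\{(\langle\mat{z}_1,\mat{u}\rangle,\dots,\langle\mat{z}_n,\mat{u}\rangle):\qnorm{\mat{u}}=1\}$ form a bounded (indeed compact) set $T\subset\R^n$; the asserted inequality is then exactly the comparison form
\[
\E\sup_{\mat{t}\in T}\sum\nolimits_k\varepsilon_k\varphi_k(t_k)\ \le\ \E\sup_{\mat{t}\in T}\sum\nolimits_k\varepsilon_k t_k
\]
of \cite[Theorem~4.12]{LT91}, specialized to this $T$ and these $\varphi_k$ (by duality the right-hand side is $\E\pnorm{\sum\nolimits_k\varepsilon_k\mat{z}_k}$ with $p$ conjugate to $q$). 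Since the most frequently cited statement of the contraction principle carries an extraneous factor of two and absolute values around the Rademacher sums, I would rather reproduce the short self-contained argument, which is just the inductive kernel of that theorem.

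Replace the $\varphi_k$ by the identity one index at a time. Freezing every Rademacher sign but $\varepsilon_n$ and conditioning, it suffices to show that for every fixed function $g$ on $T$,
\[
\sup_{\mat{t}\in T}\bigl(g(\mat{t})+|t_n|\bigr)+\sup_{\mat{t}\in T}\bigl(g(\mat{t})-|t_n|\bigr)\ \le\ \sup_{\mat{t}\in T}\bigl(g(\mat{t})+t_n\bigr)+\sup_{\mat{t}\in T}\bigl(g(\mat{t})-t_n\bigr),
\]
because the left side is $2\,\E_{\varepsilon_n}\sup_{\mat{t}\in T}\bigl(g(\mat{t})+\varepsilon_n|t_n|\bigr)$ and the right side is $2\,\E_{\varepsilon_n}\sup_{\mat{t}\in T}\bigl(g(\mat{t})+\varepsilon_n t_n\bigr)$. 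Rewrite the left side as $\sup_{\mat{t},\mat{t}'\in T}\bigl[g(\mat{t})+g(\mat{t}')+|t_n|-|t'_n|\bigr]$. For a fixed pair $\mat{t},\mat{t}'$ one has $|t_n|-|t'_n|\le\bigl||t_n|-|t'_n|\bigr|\le|t_n-t'_n|$ (using $a\le|a|$, then the reverse triangle inequality), and $|t_n-t'_n|$ equals either $t_n-t'_n$ or $t'_n-t_n$; in the first case $g(\mat{t})+g(\mat{t}')+|t_n-t'_n|=\bigl(g(\mat{t})+t_n\bigr)+\bigl(g(\mat{t}')-t'_n\bigr)$ and in the second it equals $\bigl(g(\mat{t}')+t'_n\bigr)+\bigl(g(\mat{t})-t_n\bigr)$, so in both cases the quantity is at most the right side above. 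Taking the supremum over $\mat{t},\mat{t}'$ yields the displayed inequality; iterating over $k=n,n-1,\dots,1$ and chaining the conditional expectations by the tower property finishes the proof.

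I do not anticipate a real obstacle: in this finite-dimensional setting every supremum is a deterministic function of the finitely many Rademacher signs, so measurability is automatic, and the only point requiring care is that the single-coordinate inequality must be established for a completely arbitrary $g$, because at an intermediate stage the objective mixes already-replaced linear terms with not-yet-replaced absolute values. The sole computational ingredient is the elementary pair of bounds $a\le|a|$ and $\bigl||a|-|b|\bigr|\le|a-b|$, and the only structural observation needed is that the proposition is precisely the general comparison principle applied with $T$ the image of the $\ell_q$ unit sphere under $\mat{u}\mapsto(\langle\mat{z}_k,\mat{u}\rangle)_k$.
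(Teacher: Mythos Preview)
Your argument is correct. The paper does not prove this proposition at all: it simply quotes the statement as a version of Talagrand's Rademacher comparison theorem \cite[Theorem~4.12 et seq.]{LT91} and uses it as a black box. What you have written is precisely the standard one-coordinate-at-a-time inductive proof of that theorem, specialized to the contractions $\varphi_k(t)=|t|$ and the index set $T=\{(\langle\mat{z}_k,\mat{u}\rangle)_k:\qnorm{\mat{u}}=1\}$, so you have supplied the argument the paper merely cites.
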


Now we state and prove the bound on the expected norm of $\mat{Z}$.
\begin{thm}
Let $\mat{Z}$ be a random matrix with independent, zero-mean entries and let $\boldsymbol{\varepsilon}$ be a Rademacher vector independent of $\mat{Z}$. Then
\[
\E\norm{\mat{Z}}_{\infty\rightarrow p} \leq 2\E\pnorm{\sum\nolimits_k \varepsilon_k \mat{z}_k} + 2 \supoverqball \E\sum\nolimits_k \left|\sum\nolimits_j \varepsilon_j Z_{jk} u_j \right|
\]
where $q$ is the conjugate exponent of $p$.
\label{thm:inftopbound}
\end{thm}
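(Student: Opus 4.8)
The plan is to realize $\norm{\mat{Z}}_{\infty\rightarrow p}$ as the supremum of an empirical process indexed by a dual vector, symmetrize that process with Proposition~\ref{prop:symmetrization}, strip off the absolute values using the Rademacher comparison inequality Proposition~\ref{prop:comparison}, and separately absorb the bias that symmetrization leaves behind. First I would rewrite the norm via duality of $\ell_p$ and $\ell_q$ (equivalently $\norm{\mat{Z}}_{\infty\rightarrow p} = \norm{\mat{Z}^T}_{q\rightarrow 1}$):
\[
\norm{\mat{Z}}_{\infty\rightarrow p} = \supoverqball \sum\nolimits_k \left|\langle \mat{z}_k, \mat{u}\rangle\right| = \supoverqball \sum\nolimits_k f_{\mat{u}}(\mat{z}_k),
\]
where $f_{\mat{u}}(\mat{z}) = \left|\langle \mat{z}, \mat{u}\rangle\right|$. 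The columns $\mat{z}_1, \dots, \mat{z}_n$ are independent because the entries of $\mat{Z}$ are, so the right side is the supremum of an empirical process over the class $\{f_{\mat{u}} : \qnorm{\mat{u}} = 1\}$. The key structural point is that this process is \emph{not} centered --- $f_{\mat{u}}$ is a seminorm rather than a linear functional --- so symmetrization will not be entirely clean.

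Next I would center and symmetrize. Let $\mat{Z}'$ be an independent copy of $\mat{Z}$ with columns $\mat{z}_k'$. Writing $f_{\mat{u}}(\mat{z}_k) = \bigl(f_{\mat{u}}(\mat{z}_k) - \E f_{\mat{u}}(\mat{z}_k')\bigr) + \E f_{\mat{u}}(\mat{z}_k')$, taking the supremum over $\mat{u}$, and moving the inner expectation outside the supremum by Jensen, one gets
\[
\E \norm{\mat{Z}}_{\infty\rightarrow p} \leq \E \supoverqball \sum\nolimits_k \bigl(f_{\mat{u}}(\mat{z}_k) - f_{\mat{u}}(\mat{z}_k')\bigr) + \supoverqball \sum\nolimits_k \E f_{\mat{u}}(\mat{z}_k).
\]
Proposition~\ref{prop:symmetrization}, applied with the independent columns as the underlying variables and the class above, turns the first term into $\E\, \supoverqball \sum_k \varepsilon_k\bigl(f_{\mat{u}}(\mat{z}_k) - f_{\mat{u}}(\mat{z}_k')\bigr)$, which the triangle inequality and the symmetry of $\boldsymbol{\varepsilon}$ bound by $2\,\E\, \supoverqball \sum_k \varepsilon_k \left|\langle \mat{z}_k, \mat{u}\rangle\right|$. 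Because $\langle \mat{z}_k, \mat{u}\rangle$ is linear in $\mat{u}$, Proposition~\ref{prop:comparison} replaces $\left|\langle \mat{z}_k, \mat{u}\rangle\right|$ with $\langle \mat{z}_k, \mat{u}\rangle$, and then $\supoverqball \sum_k \varepsilon_k \langle \mat{z}_k, \mat{u}\rangle = \supoverqball \langle \sum_k \varepsilon_k \mat{z}_k,\, \mat{u}\rangle = \pnorm{\sum_k \varepsilon_k \mat{z}_k}$. This yields the first term $2\,\E \pnorm{\sum_k \varepsilon_k \mat{z}_k}$ of the theorem.

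It remains to handle the leftover term $\supoverqball \sum_k \E\left|\langle \mat{z}_k, \mat{u}\rangle\right|$. For a fixed $\mat{u}$ and a fixed column index $k$, the scalar $\langle \mat{z}_k, \mat{u}\rangle = \sum_j Z_{jk} u_j$ is a sum over $j$ of independent, zero-mean random variables, so a further symmetrization (again a consequence of Proposition~\ref{prop:symmetrization}, now applied within a single column) gives $\E\left|\langle \mat{z}_k, \mat{u}\rangle\right| \leq 2\,\E\left|\sum_j \varepsilon_j Z_{jk} u_j\right|$. Summing over $k$, interchanging the sum with the expectation, and taking the supremum over $\qnorm{\mat{u}} = 1$ produces exactly the second term $2\,\supoverqball \E \sum_k \left|\sum_j \varepsilon_j Z_{jk} u_j\right|$; adding the two contributions completes the proof. (The Rademacher vectors in the two terms have different lengths, $n$ and $m$; since the terms are separate, this causes no difficulty.)

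The crux is the second, scalar symmetrization forced on us by the bias term: because the natural column-indexed empirical process is uncentered, one cannot finish with a single symmetrization. Indeed, randomizing the column signs alone accomplishes nothing, since $\mat{Z} \mapsto \mat{Z}\mat{D}$ is an isometry for the $\infp$ operator norm whenever $\mat{D}$ is a diagonal sign matrix; the argument must pass through the dual vector $\mat{u}$ (where the signs genuinely bite), and the uncentered remainder must then be controlled one column at a time. This two-layer structure is precisely what makes the two-term shape of the bound unavoidable.
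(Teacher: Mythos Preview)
Your proof is correct and follows essentially the same route as the paper's: express the norm via duality as $\supoverqball\sum_k|\langle\mat{z}_k,\mat{u}\rangle|$, split into a centered piece plus a bias $\supoverqball\sum_k\E|\langle\mat{z}_k,\mat{u}\rangle|$, handle the centered piece by column-wise symmetrization (Proposition~\ref{prop:symmetrization}) followed by the Rademacher comparison (Proposition~\ref{prop:comparison}) to obtain $2\E\pnorm{\sum_k\varepsilon_k\mat{z}_k}$, and handle the bias by a second, row-wise symmetrization inside each column. Your closing remark about why a single column-sign randomization cannot suffice is a nice piece of intuition that the paper does not make explicit.
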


\begin{proof}[Proof of Theorem \ref{thm:inftopbound}] 
By duality, 
\[
\E\norm{\mat{Z}}_{\infty\rightarrow p}=\E\norm{\mat{Z}^{T}}_{q\rightarrow1}=\E\supoverqball\sum\nolimits _{k}|\langle\mat{z}_{k},\mat{u}\rangle|.
\]
Center the terms in the sum and apply subadditivity of the maximum to get
\begin{equation}
\begin{aligned}
\E\norm{\mat{Z}}_{\infty\rightarrow p} & \leq\E\supoverqball\sum\nolimits_{k}(|\langle\mat{z}_{k},\mat{u}\rangle|-\E^\prime|\langle\mat{z}_{k}^\prime,\mat{u}\rangle|)+\supoverqball\E\sum\nolimits_{k}|\langle\mat{z}_{k},\mat{u}\rangle| \\
	& \eqqcolon F+S.
\end{aligned}
\label{eqn:infpexpression}
\end{equation}

Begin with the first term in \eqref{eqn:infpexpression}. Use Jensen's inequality to draw the expectation outside of the maximum: 
\[
F\leq\E\supoverqball\sum\nolimits_{k}(|\langle\mat{z}_{k},\mat{u}\rangle|-|\langle\mat{z}_{k}^{\prime},\mat{u}\rangle|).
\]
Now apply Proposition \ref{prop:symmetrization} to symmetrize the random variable: 
\[
F\leq\E\supoverqball\sum\nolimits_{k}\varepsilon_{k}(|\langle\mat{z}_{k},\mat{u}\rangle|-|\langle\mat{z}_{k}^{\prime},\mat{u}\rangle|).
\]
By the subadditivity of the maximum, \[
F\leq\E\left(\supoverqball\sum\nolimits_{k}\varepsilon_{k}|\langle\mat{z}_{k},\mat{u}\rangle|+\supoverqball\sum\nolimits_{k}-\varepsilon_{k}|\langle\mat{z}_{k},\mat{u}\rangle|\right)=2\E\supoverqball\sum\nolimits_{k}\varepsilon_{k}|\langle\mat{z}_{k},\mat{u}\rangle|,
\]
where we have invoked the fact that $-\varepsilon_{k}$ has the Rademacher distribution. Apply Proposition \ref{prop:comparison} to get the final estimate of $F$: 
\[
F\leq2\E\supoverqball\sum\nolimits_{k}\varepsilon_{k}\langle\mat{z}_{k},\mat{u}\rangle=2\E\supoverqball\left\langle \sum\nolimits_{k}\varepsilon_{k}\mat{z}_{k},\mat{u}\right\rangle =2\E\norm{\sum\nolimits_{k}\varepsilon_{k}\mat{z}_{k}}_{p}.
\]

Now consider the last term in \eqref{eqn:infpexpression}. Use Jensen's inequality to prepare for symmetrization: 
\[
\begin{split}
S & =\supoverqball\E\sum\nolimits_{k}\left|\sum\nolimits_{j}Z_{jk}u_{j}\right|=\supoverqball\E\sum\nolimits_{k}\left|\sum\nolimits_{j}(Z_{jk}-\E^\prime Z_{jk}^\prime)u_{j}\right|\\
 & \leq\supoverqball\sum\nolimits_{k}\E\left|\sum\nolimits_{j}(Z_{jk}-Z_{jk}^{\prime})u_{j}\right|.
\end{split}
\]
Apply Proposition \ref{prop:symmetrization} to the expectation of the inner sum to see
\[
S\leq\supoverqball\sum\nolimits_{k}\E\left|\sum\nolimits_{j}\varepsilon_{j}(Z_{jk}-Z_{jk}^{\prime})u_{j}\right|.
\]
The triangle inequality gives us the final expression:
\[
S\leq\supoverqball2\E\sum\nolimits_{k}\left|\sum\nolimits_{j}\varepsilon_{j}Z_{jk}u_{j}\right|.
\]
Introduce the bounds for $F$ and $S$ into \eqref{eqn:infpexpression} to complete the proof.
\end{proof}

\subsection{Tail bound for $\infp$ norm}
We now develop a deviation bound for the $\infp$ approximation error. The argument is based on a bounded differences inequality.

First we establish some notation. Let $g:\R^{n}\rightarrow\R$ be a measurable function of $n$ random variables. Let $X_{1},\ldots,X_{n}$ be independent random variables, and write $W=g(X_{1},\ldots,X_{n})$. Let $W_i$ denote the random variable obtained by replacing the $i$th argument of $g$ with an independent copy: $W_{i}=g(X_{1},\dots,X_{i}^{\prime},\ldots,X_{n})$. 

The following bounded differences inequality states that if $g$ is insensitive to changes of a single argument, then $W$ does not deviate much from its mean.
\begin{prop}[\protect{\cite{BLM03}}]
Let $W$ and $\{W_i\}$ be random variables defined as above. Assume that there exists a positive number $C$ such that, almost surely, 
\[
\sum\nolimits_{i=1}^{n}(W-W_{i})^{2}\mathbb{1}_{W>W_{i}}\leq C.
\]
Then, for all $t>0,$
\[
\Prob{W>\E W+t}\leq \e^{-t^{2}/(4C)}.
\]
\label{prop:logsob}
\end{prop}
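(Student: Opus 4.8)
The plan is to prove this by the \emph{entropy method} (the Herbst argument), following Boucheron, Lugosi, and Massart~\cite{BLM03}. Fix $\lambda>0$ and set $F(\lambda)=\E\,\e^{\lambda(W-\E W)}$. The goal is to show $\log F(\lambda)\le C\lambda^2$; granting this, Markov's inequality applied to $\e^{\lambda(W-\E W)}$ gives $\Prob{W>\E W+t}\le\e^{-\lambda t+C\lambda^2}$, and the choice $\lambda=t/(2C)$ yields the advertised bound $\e^{-t^2/(4C)}$.

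To obtain $\log F(\lambda)\le C\lambda^2$, I would first establish a modified logarithmic Sobolev inequality for $\e^{\lambda W}$. Write $\phi(x)=\e^x-x-1$. Sub-additivity (tensorization) of entropy reduces $\mathrm{Ent}(\e^{\lambda W})$ to a sum of one-coordinate entropies $\mathrm{Ent}_i(\e^{\lambda W})$, and a variational (duality) bound for entropy, used with the $X_i$-free comparison function $\e^{\lambda W_i}$, gives
\[
\mathrm{Ent}(\e^{\lambda W})\le\sum\nolimits_i\E\left[\e^{\lambda W}\,\phi\!\left(-\lambda(W-W_i)\right)\right].
\]
Now split the $i$th term by the sign of $W-W_i$. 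On $\{W>W_i\}$ the argument of $\phi$ is nonpositive, so $\phi(-\lambda(W-W_i))\le\tfrac12\lambda^2(W-W_i)^2$; summing and invoking the hypothesis $\sum_i(W-W_i)^2\mathbb{1}_{W>W_i}\le C$ controls these contributions by $\tfrac12 C\lambda^2\,\E\,\e^{\lambda W}$. On $\{W<W_i\}$, the key observation is that, conditionally on $(X_j)_{j\ne i}$, the pair $(W,W_i)$ is exchangeable, since each is $g$ evaluated at an i.i.d.\ entry in coordinate $i$; swapping $W\leftrightarrow W_i$ turns these terms into $\E[\e^{\lambda W_i}\phi(\lambda(W-W_i))\mathbb{1}_{W>W_i}]$, and then $\e^{\lambda W_i}\e^{\lambda(W-W_i)}=\e^{\lambda W}$ together with the elementary inequality $\phi(s)\le\tfrac12 s^2\e^s$ for $s\ge0$ bounds them, again using the hypothesis, by $\tfrac12 C\lambda^2\,\E\,\e^{\lambda W}$. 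Adding the two halves gives $\mathrm{Ent}(\e^{\lambda W})\le C\lambda^2\,\E\,\e^{\lambda W}$ for every $\lambda>0$.

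The finish is the usual differential-inequality step. Since $\mathrm{Ent}(\e^{\lambda(W-\E W)})=\lambda F'(\lambda)-F(\lambda)\log F(\lambda)$, the bound above reads $\lambda F'(\lambda)-F(\lambda)\log F(\lambda)\le C\lambda^2F(\lambda)$, i.e.\ $\frac{d}{d\lambda}\!\left(\lambda^{-1}\log F(\lambda)\right)\le C$; as $\lambda\to0^+$ one has $\lambda^{-1}\log F(\lambda)\to F'(0)/F(0)=0$, so integrating yields $\log F(\lambda)\le C\lambda^2$, which closes the argument. I expect the only real obstacle to be the $\{W<W_i\}$ terms: a naive bound there fails because the argument of $\phi$ becomes positive and $\phi$ grows exponentially, and it is precisely the resampling symmetry plus the sharpened estimate $\phi(s)\le\tfrac12 s^2\e^s$ that make the \emph{one-sided} hypothesis on $\sum_i(W-W_i)^2\mathbb{1}_{W>W_i}$ enough. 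The remaining ingredients---tensorization of entropy, the variational formula, and the calculus at the end---are standard.
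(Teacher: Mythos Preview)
The paper does not prove this proposition at all: it is quoted verbatim as a result of Boucheron, Lugosi, and Massart~\cite{BLM03} and used as a black box. So there is no ``paper's own proof'' to compare against. Your proposal is a faithful reconstruction of the entropy-method argument that BLM03 themselves use, and the outline is correct: tensorization of entropy, the variational bound with the comparison value $\e^{\lambda W_i}$ (which is indeed $X_i$-measurable once you condition on $X_i'$), the exchangeability of $(W,W_i)$ given $(X_j)_{j\ne i}$ to flip the $\{W<W_i\}$ contribution onto $\{W>W_i\}$, the two elementary inequalities $\phi(-s)\le s^2/2$ and $\phi(s)\le \tfrac12 s^2\e^{s}$ for $s\ge 0$, and the Herbst differential inequality. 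Each of these steps checks out as you describe them, so your sketch would produce a valid proof; just be aware that in the context of this paper the proposition is simply cited, not proved.
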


To apply Proposition \ref{prop:logsob}, we let $\mat{Z}=\mat{A}-\mat{X}$ be our error matrix, $W=\norm{\mat{Z}}_{\infty\rightarrow p}$, and $W^{jk}=\norm{\mat{Z}^{jk}}_{\infty\rightarrow p}$, where $\mat{Z}^{jk}$ is a matrix obtained by replacing $a_{jk}-X_{jk}$ with an identically
distributed variable $a_{jk}-X_{jk}^{\prime}$ while keeping all other variables fixed. The $\infp$ norms are sufficiently insensitive to each entry of the matrix that Proposition \ref{prop:logsob} gives us a useful deviation bound.

\begin{thm}
Fix an $m \times n$ matrix $\mat{A}$, and let $\mat{X}$ be a random matrix with independent entries for which $\E X = \mat{A}$. Assume $\left|X_{jk}\right|\leq\frac{D}{2}$ almost surely for all $j,k$. Then, for all $t>0$,
\[
\Prob{\norm{\mat{A}-\mat{X}}_{\infty\rightarrow p}> \E\norm{\mat{A}-\mat{X}}_{\infty\rightarrow p}+t}\leq \e^{-t^{2}/(4D^{2}nm^{s})}
\]
where $s=\max\{0, 1-2/q\}$ and $q$ is the conjugate exponent to $p$. 
\label{thm:inftopnormdevbnd}
\end{thm}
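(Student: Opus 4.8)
The plan is to invoke the bounded differences inequality, Proposition~\ref{prop:logsob}, with $W = \norm{\mat{A}-\mat{X}}_{\infty\rightarrow p} = \norm{\mat{Z}}_{\infty\rightarrow p}$ regarded as a function of the $nm$ independent entries $X_{jk}$, where $\mat{Z} = \mat{A}-\mat{X}$. Replacing $X_{jk}$ by an independent copy yields $W^{jk} = \norm{\mat{Z}^{jk}}_{\infty\rightarrow p}$, and the hypothesis $|X_{jk}| \le D/2$ guarantees that the $(j,k)$ entry of the error matrix moves by at most $|Z_{jk} - Z_{jk}^\prime| = |X_{jk}^\prime - X_{jk}| \le D$. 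Everything then reduces to checking that
\[
\sum\nolimits_{jk}(W - W^{jk})^2\,\mathbb{1}_{W > W^{jk}} \le D^2 n m^{s}
\]
almost surely; feeding this into Proposition~\ref{prop:logsob} immediately delivers the stated tail bound.

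To control a single difference $W - W^{jk}$, I would work through the dual description of the norm already used in the proof of Theorem~\ref{thm:inftopbound}: $W = \norm{\mat{Z}^T}_{q\rightarrow 1} = \supoverqball \sum\nolimits_l |\langle\mat{z}_l,\mat{u}\rangle|$, the maximum running over $\mat{u}\in\R^m$. Fix a maximizer $\mat{u}^\star$ for $\mat{Z}$; it exists since the objective is continuous on the compact $\ell_q$ sphere. Because $\mat{Z}^{jk}$ agrees with $\mat{Z}$ except in entry $(j,k)$, evaluating the dual objective for $\mat{Z}^{jk}$ at this same $\mat{u}^\star$ leaves all terms $l \ne k$ unchanged and shifts the $k$th term by at most $|Z_{jk} - Z_{jk}^\prime|\,|u^\star_j| \le D\,|u^\star_j|$. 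Hence $W^{jk} \ge W - D\,|u^\star_j|$, so on the event $W > W^{jk}$ we have $(W - W^{jk})^2 \le D^2 (u^\star_j)^2$, and summing over all $nm$ index pairs gives
\[
\sum\nolimits_{jk}(W - W^{jk})^2\,\mathbb{1}_{W > W^{jk}} \le D^2 \sum\nolimits_{jk}(u^\star_j)^2 = D^2 n \norm{\mat{u}^\star}_2^2 .
\]

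It then remains to bound $\norm{\mat{u}^\star}_2^2$ in terms of $m$ and $q$. Since $\mat{u}^\star\in\R^m$ has $\qnorm{\mat{u}^\star} = 1$, the elementary comparison of $\ell_q$ norms on $\R^m$ gives $\norm{\mat{u}^\star}_2 \le \qnorm{\mat{u}^\star} = 1$ when $q \le 2$ and $\norm{\mat{u}^\star}_2 \le m^{1/2 - 1/q}\qnorm{\mat{u}^\star} = m^{1/2-1/q}$ when $q > 2$; in either case $\norm{\mat{u}^\star}_2^2 \le m^{\max\{0,\,1-2/q\}} = m^{s}$. This confirms the bounded differences hypothesis with $C = D^2 n m^{s}$ and finishes the proof. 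The one place where care is needed is exactly this routing through the dual $\ell_q$ representation: it is what makes a single-entry perturbation disturb only a single coordinate of the witness $\mat{u}^\star$, which is responsible for the factor $m^s$ rather than $m$. The cruder estimate $|W - W^{jk}| \le \norm{\mat{Z} - \mat{Z}^{jk}}_{\infty\rightarrow p} = |Z_{jk} - Z_{jk}^\prime| \le D$ would only give $C = D^2 nm$, which has the right exponent solely in the case $p = 1$.
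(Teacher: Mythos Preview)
Your proposal is correct and follows essentially the same route as the paper's proof: identify a witness on the $\ell_q$ sphere, observe that a single-entry perturbation shifts the objective by at most $D|u^\star_j|$, sum to get $D^2 n\norm{\mat{u}^\star}_2^2$, and finish with the standard $\ell_q$--$\ell_2$ comparison before invoking Proposition~\ref{prop:logsob}. The only cosmetic difference is that the paper works with the bilinear form $W = \mat{u}^T\mat{Z}\mat{v}$ with $\qnorm{\mat{u}}=1$ and $\infnorm{\mat{v}}=1$, whereas you use the equivalent $q\to 1$ dual $W = \supoverqball\sum_l|\langle\mat{z}_l,\mat{u}\rangle|$; the two computations are interchangeable and yield identical constants.
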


\begin{proof}
Let $q$ be the conjugate exponent of $p$, and choose $\mat{u},\mat{v}$ such that $W=\mat{u}^{T}\mat{Z}\mat{v}$ and $\norm{\mat{u}}_q = 1$ and $\norm{\mat{v}}_\infty = 1.$ Then 
\[
(W-W^{jk})\mathbb{1}_{W>W^{jk}}\leq\mat{u}^{T}\left(\mat{Z}-\mat{Z}^{jk}\right)\mat{v}\,\mathbb{1}_{W>W^{jk}}=(X_{jk}^{\prime}-X_{jk})u_{j}v_{k}\,\mathbb{1}_{W>W^{jk}}\leq D|u_{j}v_{k}|.
\]
This implies 
\[
\sum\nolimits_{j,k}(W-W^{jk})^{2}\mathbb{1}_{W>W^{jk}}\leq D^{2}\sum\nolimits_{j,k}|u_{j}v_{k}|^{2}\leq nD^{2}\norm{\mat{u}}_{2}^{2},
\]
so we can apply Proposition \ref{prop:logsob} if we have an estimate for $\norm{\mat{u}}_2^2$. We have the bounds $\norm{\mat{u}}_{2}\leq\norm{\mat{u}}_{q}$ for $q\in[1,2]$ and $\norm{\mat{u}}_{2}\leq m^{1/2-1/q}\norm{\mat{u}}_{q}$ for $q\in[2,\infty]$. Therefore,
\[
\sum\nolimits_{j,k}(W-W^{jk})^{2}\mathbb{1}_{W>W^{jk}}\leq D^{2}
\begin{cases}
nm^{1-2/q}, & q\in[2,\infty]\\
n, & q\in[1,2].
\end{cases}
\]
It follows from Proposition \ref{prop:logsob} that 
\[
\Prob{\norm{\mat{A}-\mat{X}}_{\infty\rightarrow p}>\E\norm{\mat{A} - \mat{X}}_{\infty\rightarrow p}+t}= \Prob{W>\E W+t}\leq \e^{-t^{2}/(4D^{2}nm^{s})}
\]
where $s=\max\left\{0,1-2/q\right\}.$
\end{proof}
It is often convenient to measure deviations on the scale of the mean. Taking $t=\delta \E\norm{\mat{A} - \mat{X}}_{\infty \rightarrow p}$ in Theorem $\ref{thm:inftopnormdevbnd}$ gives the following result.
\begin{cor}
Under the conditions of Theorem \ref{thm:inftopnormdevbnd}, for all $\delta>0$, 
\[
\Prob{\norm{\mat{A}-\mat{X}}_{\infty \rightarrow p}>(1 + \delta)\E\norm{\mat{A} - \mat{X}}_{\infty \rightarrow p}}\leq \e^{-\delta^{2}\left(\E\norm{\mat{A} - \mat{X}}_{\infty \rightarrow p}\right)^{2}/(4D^{2}nm^{s})}.
\]
\label{cor:inftopreldevbnd}
\end{cor}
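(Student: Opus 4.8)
The plan is to substitute directly into Theorem \ref{thm:inftopnormdevbnd}. The hypotheses of the corollary are exactly those of the theorem, so I may invoke its conclusion verbatim: for every $t > 0$,
\[
\Prob{\norm{\mat{A}-\mat{X}}_{\infty\rightarrow p} > \E\norm{\mat{A}-\mat{X}}_{\infty\rightarrow p} + t} \leq \e^{-t^{2}/(4D^{2}nm^{s})},
\]
with $s = \max\{0, 1-2/q\}$. First I would fix $\delta > 0$ and set $t = \delta\,\E\norm{\mat{A}-\mat{X}}_{\infty\rightarrow p}$; since the mean of a norm is nonnegative and $\delta > 0$, this $t$ is admissible (the degenerate case where the mean vanishes forces $\mat{A} = \mat{X}$ almost surely and both sides are trivial, so nothing is lost by assuming $t > 0$).

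Next I would rewrite the event on the left-hand side: $\E\norm{\mat{A}-\mat{X}}_{\infty\rightarrow p} + t = (1+\delta)\E\norm{\mat{A}-\mat{X}}_{\infty\rightarrow p}$, so the tail probability is exactly $\Prob{\norm{\mat{A}-\mat{X}}_{\infty\rightarrow p} > (1+\delta)\E\norm{\mat{A}-\mat{X}}_{\infty\rightarrow p}}$. Substituting $t^{2} = \delta^{2}\big(\E\norm{\mat{A}-\mat{X}}_{\infty\rightarrow p}\big)^{2}$ into the exponent on the right-hand side yields the claimed bound
\[
\e^{-\delta^{2}\left(\E\norm{\mat{A}-\mat{X}}_{\infty\rightarrow p}\right)^{2}/(4D^{2}nm^{s})}.
\]

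There is no real obstacle here: the corollary is a reparametrization of Theorem \ref{thm:inftopnormdevbnd} designed to express deviations on the scale of the mean, and the only point requiring a word of care is confirming that the substituted value of $t$ is strictly positive so that the theorem applies.
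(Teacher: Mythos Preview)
Your proposal is correct and matches the paper's approach exactly: the paper simply states that taking $t=\delta\,\E\norm{\mat{A}-\mat{X}}_{\infty\rightarrow p}$ in Theorem~\ref{thm:inftopnormdevbnd} gives the corollary. Your additional remark handling the degenerate case where the mean vanishes is a nice touch but not strictly needed, since in that case the right-hand side equals $1$ and the inequality is vacuous.
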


\section{Approximation in the $\infone$ norm}

\label{sec:inf1norm}

In this section, we develop the $\infone$ error bound as a consequence of Theorem \ref{thm:inftopbound}. We then prove that one form of the error bound is optimal, and we describe an example of its application to matrix sparsification.

\subsection{Expected $\infone$ norm}
To derive the $\infone$ error bound, we first apply Theorem \ref{thm:inftopbound} with $p=1$.

\begin{thm}
Suppose that $\mat{Z}$ is a random matrix with independent, zero-mean entries. Then 
\[
\E\infonorm{\mat{Z}}\leq2\E(\colnorm{\mat{Z}}+\colnorm{\mat{Z}^{T}}).
\]
\label{thm:inf1errbound}
\end{thm}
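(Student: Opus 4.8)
The plan is to specialize Theorem~\ref{thm:inftopbound} to $p = 1$, whose conjugate exponent is $q = \infty$, so that the bound reads $\E\infonorm{\mat{Z}}\leq 2\E\norm{\sum\nolimits_k\varepsilon_k\mat{z}_k}_1+2\supoverinfball\E\sum\nolimits_k\left|\sum\nolimits_j\varepsilon_j Z_{jk}u_j\right|$. I would then show that the first term is at most $2\E\colnorm{\mat{Z}^{T}}$ and the second is at most $2\E\colnorm{\mat{Z}}$; adding these gives the claim.

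For the first term, I would expand the $\ell_1$ norm coordinatewise: the $j$th entry of the random vector $\sum_k\varepsilon_k\mat{z}_k$ is $\sum_k\varepsilon_k Z_{jk}$, so $\E\norm{\sum\nolimits_k\varepsilon_k\mat{z}_k}_1=\sum\nolimits_j\E\left|\sum\nolimits_k\varepsilon_k Z_{jk}\right|$. Conditioning on $\mat{Z}$ and applying the upper bound in the Khintchine inequality to the Rademacher sum in the index $k$ bounds the inner expectation by $\left(\sum\nolimits_k Z_{jk}^2\right)^{1/2}$, the $\ell_2$ norm of the $j$th row of $\mat{Z}$. Summing over $j$ and taking the expectation over $\mat{Z}$ identifies this term with $\E\colnorm{\mat{Z}^{T}}$, since the rows of $\mat{Z}$ are exactly the columns of $\mat{Z}^{T}$.

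For the second term, I would fix $\mat{u}$ with $\infnorm{\mat{u}}=1$, condition on $\mat{Z}$, push the finite sum over $k$ outside the Rademacher expectation, and apply Khintchine to the Rademacher sum in $j$: this gives $\E_{\boldsymbol{\varepsilon}}\left|\sum\nolimits_j\varepsilon_j Z_{jk}u_j\right|\leq\left(\sum\nolimits_j Z_{jk}^2 u_j^2\right)^{1/2}\leq\left(\sum\nolimits_j Z_{jk}^2\right)^{1/2}=\norm{\mat{z}_k}_2$, where the last inequality uses $|u_j|\leq\infnorm{\mat{u}}=1$. Summing over $k$ and taking the expectation over $\mat{Z}$ shows that $\E\sum\nolimits_k\left|\sum\nolimits_j\varepsilon_j Z_{jk}u_j\right|\leq\E\colnorm{\mat{Z}}$ for every admissible $\mat{u}$, so the maximum over $\mat{u}$ is bounded by the same quantity.

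I do not anticipate any real obstacle: the statement is an essentially immediate corollary of Theorem~\ref{thm:inftopbound}. The only points that need care are bookkeeping---keeping straight that the first term contracts along columns and therefore produces the row $\ell_2$ norms of $\mat{Z}$ (that is, $\colnorm{\mat{Z}^{T}}$), while the second contracts along rows and produces the column $\ell_2$ norms $\colnorm{\mat{Z}}$---and the interchange of the maximum and the expectation in the second term, which is legitimate because the per-$\mat{u}$ bound $\E\colnorm{\mat{Z}}$ does not depend on $\mat{u}$.
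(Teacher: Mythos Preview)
Your proposal is correct and follows essentially the same route as the paper: specialize Theorem~\ref{thm:inftopbound} to $p=1$, then bound each of the two Rademacher terms by the appropriate column norm. The only cosmetic difference is that where you invoke the Khintchine upper bound $\E_{\boldsymbol{\varepsilon}}\left|\sum_k\varepsilon_k x_k\right|\leq\norm{\mat{x}}_2$, the paper phrases the same step as H\"older's inequality $\E|X|\leq(\E X^2)^{1/2}$ followed by expanding the square and using the independence of the Rademacher variables---these are identical computations.
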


\begin{proof}
Apply Theorem \ref{thm:inftopbound} to get 
\begin{equation}
\begin{aligned}
\E\norm{\mat{Z}}_{\infty\rightarrow1} & \leq2\E\norm{\sum\nolimits_{k}\varepsilon_{k}\mat{z}_{k}}_{1}+2\supoverinfball\E\sum\nolimits_{k}\left|\sum\nolimits_{j}\varepsilon_{j}Z_{jk}u_{j}\right| \\
 & \eqqcolon F + S.
\end{aligned}
\label{eqn:rawinf1bound}
\end{equation}
Use H\"older's inequality to bound the first term in \eqref{eqn:rawinf1bound} with a sum of squares:
\begin{align*}
F & =2\E\sum\nolimits_{j}\left|\sum\nolimits_{k}\varepsilon_{k}Z_{jk}\right|=2\E_{\mat{Z}}\sum\nolimits_{j}\E_{\boldsymbol{\varepsilon}}\left|\sum\nolimits_{k}\varepsilon_{k}Z_{jk}\right|\\
 & \leq2\E_{\mat{Z}}\sum\nolimits_{j}\left(\E_{\boldsymbol{\varepsilon}}\left|\sum\nolimits_{k}\varepsilon_{k}Z_{jk}\right|^{2}\right)^{1/2}.
\end{align*}
The inner expectation can be computed exactly by expanding the square and using the independence of the Rademacher variables:
\[
F \leq 2\E\sum\nolimits_{j}\left(\sum\nolimits_{k}Z_{jk}^{2}\right)^{1/2}=2\E\colnorm{\mat{Z}^{T}}.
\]

We treat the second term in the same manner. Use H\"older's inequality to replace the sum with a sum of squares and invoke the independence of the Rademacher variables to eliminate cross terms:
\[
S \leq2\supoverinfball\E_{\mat{Z}}\sum\nolimits_{k}\left(\E_{\boldsymbol{\varepsilon}}\left|\sum\nolimits_{j}\varepsilon_{j}Z_{jk}u_{j}\right|^{2}\right)^{1/2}
 = 2\supoverinfball\E\sum\nolimits_{k}\left(\sum\nolimits_{j}Z_{jk}^{2}u_{j}^{2}\right)^{1/2}.
\]
Since $\infnorm{\mat{u}}=1$, it follows that $u_{j}^{2}\leq1$ for all $j$, and 
\[
S \leq2\E\sum\nolimits_{k}\left(\sum\nolimits_{j}Z_{jk}^{2}\right)^{1/2}=2\E\colnorm{\mat{Z}}.
\]
Introduce these estimates for $F$ and $S$ into \eqref{eqn:rawinf1bound} to complete the proof.
\end{proof}

Taking $\mat{Z}=\mat{A}-\mat{X}$ in Theorem \ref{thm:inf1errbound},
we find 
\[
\E\infonorm{\mat{A}-\mat{X}}\leq2\E\left[\sum\nolimits_{k}\left(\sum\nolimits_{j}(a_{jk}-X_{jk})^{2}\right)^{1/2}+\sum\nolimits_{j}\left(\sum\nolimits_{k}(a_{jk}-X_{jk})^{2}\right)^{1/2}\right].\]

A simple application of Jensen's inequality gives an error bound in terms of the variances of the entries of $\mat{X}$. 
\begin{cor}
Fix the matrix $\mat{A}$, and let $\mat{X}$ be a random matrix with independent entries for which $\E X_{jk}=a_{jk}$. Then 
\[
\E\infonorm{\mat{A}-\mat{X}}\leq2\left[\sum\nolimits_{k}\left(\sum\nolimits_{j}\var{X_{jk}}\right)^{1/2}+\sum\nolimits_{j}\left(\sum\nolimits_{k}\var{X_{jk}}\right)^{1/2}\right].
\]
\label{cor:inf1errbound}
\end{cor}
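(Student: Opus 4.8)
The plan is to derive this directly from Theorem \ref{thm:inf1errbound} by a single application of Jensen's inequality to each summand. First I would set $\mat{Z} = \mat{A} - \mat{X}$. Since the entries of $\mat{X}$ are independent with $\E X_{jk} = a_{jk}$, the error matrix $\mat{Z}$ has independent, zero-mean entries, so Theorem \ref{thm:inf1errbound} applies and gives
\[
\E\infonorm{\mat{A}-\mat{X}}\leq2\E\left[\sum\nolimits_{k}\left(\sum\nolimits_{j}(a_{jk}-X_{jk})^{2}\right)^{1/2}+\sum\nolimits_{j}\left(\sum\nolimits_{k}(a_{jk}-X_{jk})^{2}\right)^{1/2}\right].
\]

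Next I would push the expectation through the outer sums by linearity and then inside each square root. Because $x \mapsto x^{1/2}$ is concave on $[0,\infty)$, Jensen's inequality yields, for each fixed column index $k$,
\[
\E\left(\sum\nolimits_{j}(a_{jk}-X_{jk})^{2}\right)^{1/2} \leq \left(\E\sum\nolimits_{j}(a_{jk}-X_{jk})^{2}\right)^{1/2} = \left(\sum\nolimits_{j}\var{X_{jk}}\right)^{1/2},
\]
where the last equality uses linearity of expectation together with $\E(a_{jk}-X_{jk})^{2} = \var{X_{jk}}$, which holds since $a_{jk} = \E X_{jk}$. The identical argument applied to each fixed row index $j$ gives $\E(\sum_k (a_{jk}-X_{jk})^2)^{1/2} \leq (\sum_k \var{X_{jk}})^{1/2}$. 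Summing these two families of estimates and substituting into the bound above produces the claimed inequality.

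There is essentially no obstacle here: the content is entirely in Theorem \ref{thm:inf1errbound}, and the corollary is just the observation that the random quantities appearing there can be controlled in expectation by second moments via concavity of the square root. The only point requiring a word of care is the justification that $\mat{Z}$ inherits independent zero-mean entries from the hypotheses on $\mat{X}$, which is immediate. I would present the whole thing in three or four lines.
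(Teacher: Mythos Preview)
Your proposal is correct and matches the paper's own argument essentially line for line: the paper also sets $\mat{Z}=\mat{A}-\mat{X}$, invokes Theorem \ref{thm:inf1errbound}, and then notes that a simple application of Jensen's inequality (concavity of the square root) converts the expected column norms into the variance expressions. There is nothing to add.
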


\subsection{Optimality}

The bound in Corollary \ref{cor:inf1errbound} is optimal in the sense that there are families of matrices $\mat{A}$ and random approximants $\mat{X}$ for which $\E\infonorm{\mat{A}-\mat{X}}$ grows like one of the terms in the bound and dominates the other term in the bound. To show this, we construct specific examples.

Let $\mat{A}$ be a tall $m\times\sqrt{m}$ matrix of ones and choose the approximant $X_{jk} \sim 2\text{ Bern}\left(\tfrac{1}{2}\right).$
With this choice, $\var{X_{jk}}=1$, so the first term in the bound is $m$ and the second term is $m^{5/4}.$
The following argument from \cite[Sec. 4.2]{RV07} establishes that $\Vert\mat{A}-\mat{X}\Vert_{\infty\rightarrow1}$ grows like $m^{5/4}$.

Observe that the matrix $\mat{A}-\mat{X}=[\varepsilon_{jk}]$, where $\varepsilon_{jk}$ are i.i.d. Rademacher variables. Its $\infone$ norm is 
\[
\infonorm{\mat{A}-\mat{X}}=\max_{{{\infnorm{\mat{y}}=1\atop \infnorm{\mat{x}}=1}}}\sum\nolimits_{j,k}\varepsilon_{jk}x_{j}y_{k}.
\]
Let $\delta_{k}$ be a sequence of i.i.d. Rachemacher variables. By the scalar Khintchine inequality, 
\[
\E_{\boldsymbol{\delta}}\sum\nolimits_{j}\left|\sum\nolimits_{k}\varepsilon_{jk}\delta_{k}\right|\geq\sum\nolimits_{j}\frac{1}{\sqrt{2}}\|\boldsymbol{\varepsilon}_{j\cdot}\|_{2}=\frac{1}{\sqrt{2}}m^{5/4}.
\]
The probabilistic method shows that there is a sign vector $\mat{x}$ for which 
\[
\sum\nolimits_{j}\left|\sum\nolimits_{k}\varepsilon_{jk}x_{k}\right|\geq\frac{1}{\sqrt{2}}m^{5/4}.
\]
Choose the vector $\mat{y}$ with components $y_{j}=\sgn(\sum_{k}\varepsilon_{jk}x_{k}).$ Then 
\[
\norm{\mat{A}-\mat{X}}_{\infty\rightarrow1}\geq\sum\nolimits_{j}\sum\nolimits_{k}\varepsilon_{jk}y_{j}x_{k}=\sum\nolimits_{j}\left|\sum\nolimits_{k}\varepsilon_{jk}x_{k}\right|\geq\frac{1}{\sqrt{2}}m^{5/4}.
\]
This shows $\Vert\mat{A}-\mat{X}\Vert_{\infty\rightarrow1}$ grows like the second term in the error bound, so this term of the bound cannot be ignored.

These arguments, applied to a fat $\sqrt{n}\times n$ matrix of ones, also establish the necessity of the first term.

\subsection{Example application}

In this section we provide an example illustrating the application of Corollary \ref{cor:inf1errbound} to matrix sparsification.

From Corollary \ref{cor:inf1errbound} we infer that a good scheme for sparsifying a matrix $\mat{A}$ while minimizing the expected relative $\infone$ error is one which drastically increases the sparsity of $\mat{X}$ while keeping the relative error 
\[
\frac{\sum\nolimits_{k}\Big(\sum\nolimits_{j}\var{X_{jk}}\Big)^{1/2}+\sum\nolimits_{j}\Big(\sum\nolimits_{k}\var{X_{jk}}\Big)^{1/2}}{\infonorm{\mat{A}}}
\]
small. Once a sparsification scheme is chosen, the hardest part of estimating this quantity is probably estimating the $\infone$ norm of $\mat{A}$. The example shows, for a simple family of approximation schemes, what kind of sparsification results can be obtained using Corollary \ref{cor:inf1errbound} when we have a very good handle on this quantity. 

Consider the case where $\mat{A}$ is an $n\times n$ matrix whose entries all lie within an interval bounded away from zero; for definiteness, take them to be positive. Let $\gamma$ be a desired bound on the expected relative $\infone$ norm error. We choose the randomization strategy $X_{jk}\sim \frac{a_{jk}}{p}\text{ Bern}(p)$ and ask how small can $p$ be without violating our bound on the expected error.

In this case,
\[
\infonorm{\mat{A}}=\sum\nolimits_{j,k}a_{jk} = \asymO{n^{2}},
\]
and $\var{X_{jk}}=\frac{a_{jk}^{2}}{p}-a_{jk}^{2}.$ Consequently, the first term in Corollary \ref{cor:inf1errbound} satisfies
\[
\begin{aligned}
\sum\nolimits_{k}\left(\sum\nolimits_{j}\var{X_{jk}}\right)^{1/2} & =\sum\nolimits_{k}\left(\frac{1}{p}\norm{\mat{a}_{k}}_{2}^{2}-\norm{\mat{a}_{k}}_{2}^{2}\right)^{1/2}=\left(\frac{1-p}{p}\right)^{1/2}\colnorm{\mat{A}} \\
 & = \asymO{\left(\frac{1-p}{p}\right)^{1/2}n\sqrt{n}}
\end{aligned}
\]
and likewise the second term satisfies
\[
\sum\nolimits_{j}\left(\sum\nolimits_{k}\var{X_{jk}}\right)^{1/2} = \asymO{\left(\frac{1-p}{p}\right)^{1/2}n\sqrt{n}}.
\]
Therefore the relative $\infone$ norm error satisfies
\[
\frac{\sum\nolimits_{k}\Big(\sum\nolimits_{j}\var{X_{jk}}\Big)^{1/2}+\sum\nolimits_{j}\Big(\sum\nolimits_{k}\var{X_{jk}}\Big)^{1/2} }{\infonorm{\mat{A}}}= \asymO{\left(\frac{1-p}{pn}\right)^{1/2}}.
\]
It follows that $\E\infonorm{\mat{A}-\mat{X}}<\gamma$ for $p$ on the order of $(1+n\gamma^{2})^{-1}$ or larger. The expected number of nonzero entries in $\mat{X}$ is $pn^{2}$, so for matrices with this structure, we can sparsify with a relative $\infone$ norm error smaller than $\gamma$ while reducing the number of expected nonzero entries to as few as $\mathrm{O}(\frac{n^{2}}{1+n\gamma^{2}})=\mathrm{O}(\frac{n}{\gamma^{2}}).$ Intuitively, this sparsification result is optimal in the dimension: it seems we must keep on average at least one entry per row and column if we
are to faithfully approximate $\mat{A}$.

\section{Approximation in the $\inftwo$ norm}

\label{sec:inf2norm}

In this section, we develop the $\inftwo$ error bound stated in the introduction, establish the optimality of a related bound, and provide examples of its application to matrix sparsification. To derive the error bound, we first specialize Theorem \ref{thm:inftopbound} to the case of $p=2$.

\begin{thm}
Suppose that $\mat{Z}$ is a random matrix with independent, zero-mean entries. Then 
\[
\E\inftnorm{\mat{Z}}\leq2\E\frobnorm{\mat{Z}}+2\min_{\mat{D}}\E\norm{\mat{Z}\mat{D}^{-1}}_{2\rightarrow\infty}
\]
where $\mat{D}$ is a positive diagonal matrix that satisfies $\trace(\mat{D}^{2})=1$.
\label{thm:inf2errbound}
\end{thm}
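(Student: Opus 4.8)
The plan is to invoke Theorem~\ref{thm:inftopbound} with $p=2$ (so that the conjugate exponent is $q=2$ as well) and then simplify each of the two resulting terms. The theorem gives
\[
\E\inftnorm{\mat{Z}} \leq 2\E\norm{\sum\nolimits_k \varepsilon_k \mat{z}_k}_2 + 2\supovertball \E\sum\nolimits_k \left|\sum\nolimits_j \varepsilon_j Z_{jk} u_j\right|,
\]
and I would name these two terms $F$ and $S$. For $F$: since the $\mat{z}_k$ are the columns of $\mat{Z}$, the sum $\sum_k \varepsilon_k \mat{z}_k$ is a random vector; I would bound $\E\norm{\cdot}_2$ by $(\E\norm{\cdot}_2^2)^{1/2}$ via Jensen, expand the square, and use independence of the Rademacher variables to kill the cross terms, leaving $(\E\sum_{jk} Z_{jk}^2)^{1/2} = (\E\frobnorm{\mat{Z}}^2)^{1/2}$. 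A further Jensen step (or simply comparing with $\E\frobnorm{\mat{Z}}$) gives $F \leq 2\E\frobnorm{\mat{Z}}$. This mirrors exactly the $F$-estimate in the proof of Theorem~\ref{thm:inf1errbound}.

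For $S$, the goal is to produce the quantity $\min_{\mat{D}} \E\norm{\mat{Z}\mat{D}^{-1}}_{2\to\infty}$. The key idea is a change of variables: for any positive diagonal $\mat{D}$ with $\trace(\mat{D}^2)=1$, write $u_j = d_j^{-1} w_j$ where $\mat{w} = \mat{D}\mat{u}$; then $\norm{\mat{w}}_2^2 = \sum_j d_j^2 u_j^2$, which is \emph{not} $1$ in general, but one can still push the constraint around. More cleanly: I would bound the inner Rademacher sum $\E_{\boldsymbol\varepsilon}|\sum_j \varepsilon_j Z_{jk} u_j|$ by $(\sum_j Z_{jk}^2 u_j^2)^{1/2}$ using Hölder/Khintchine as in the $\infone$ proof, so that
\[
S \leq 2 \supovertball \E\sum\nolimits_k \left(\sum\nolimits_j Z_{jk}^2 u_j^2\right)^{1/2}.
\]
Now for a fixed $\mat{D}$, substitute $u_j = d_j^{-1} v_j$: the constraint $\norm{\mat{u}}_2 = 1$ becomes $\sum_j v_j^2/d_j^2 = 1$, which certainly forces $|v_j| \le d_j$, hence $\sum_k(\sum_j Z_{jk}^2 d_j^{-2} v_j^2)^{1/2}$... this is getting the indices crossed, so let me instead keep the direction: the entries of $\mat{Z}\mat{D}^{-1}$ are $Z_{jk}/d_k$, and $\norm{\mat{Z}\mat{D}^{-1}}_{2\to\infty}$ is the largest $\ell_2$ norm of a \emph{row}, i.e.\ $\max_j (\sum_k Z_{jk}^2/d_k^2)^{1/2}$. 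So I would instead bound the \emph{other} inner sum. Concretely, in $S$ the outer index is $k$ and the Rademacher variables are indexed by $j$; applying Hölder the other way (Cauchy--Schwarz on the sum over $k$ against weights $d_k$) gives
\[
\sum\nolimits_k \left(\sum\nolimits_j Z_{jk}^2 u_j^2\right)^{1/2} = \sum\nolimits_k d_k \cdot \frac{1}{d_k}\left(\sum\nolimits_j Z_{jk}^2 u_j^2\right)^{1/2} \leq \left(\sum\nolimits_k d_k^2\right)^{1/2} \left(\sum\nolimits_k \frac{1}{d_k^2}\sum\nolimits_j Z_{jk}^2 u_j^2\right)^{1/2},
\]
and the first factor is $1$ by the trace constraint. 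Swapping the order of summation in the second factor gives $(\sum_j u_j^2 \sum_k Z_{jk}^2/d_k^2)^{1/2} \le (\max_j \sum_k Z_{jk}^2/d_k^2)^{1/2} \cdot \norm{\mat{u}}_2 = \norm{\mat{Z}\mat{D}^{-1}}_{2\to\infty}$. Taking expectations and then the infimum over $\mat{D}$ yields $S \le 2\min_{\mat{D}} \E\norm{\mat{Z}\mat{D}^{-1}}_{2\to\infty}$; combining with the bound on $F$ completes the proof.

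The main obstacle is organizing the $S$ term so that the diagonal scaling matrix $\mat{D}$ emerges naturally and the $2\to\infty$ norm (max over rows) appears with the correct indices; the interplay between the outer summation index $k$, the Rademacher index $j$, and the two possible directions of Cauchy--Schwarz is easy to get backwards, and one must be careful that the Khintchine step and the diagonal-rescaling step are applied in a compatible order. Everything else---the Jensen/independence manipulations for $F$, and the fact that the supremum over $\norm{\mat{u}}_2=1$ can be taken inside or outside the expectation as needed---is routine and parallels the $\infone$ argument already given.
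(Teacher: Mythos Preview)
Your treatment of $S$ is correct and essentially matches the paper's: you apply Khintchine first and then the weighted Cauchy--Schwarz over $k$, whereas the paper applies Cauchy--Schwarz first and then moves $\E_{\boldsymbol\varepsilon}$ inside the square root; the two orderings commute and give the same bound $2\min_{\mat D}\E\twoinfnorm{\mat Z\mat D^{-1}}$.

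There is, however, a genuine gap in your treatment of $F$. You bound
\[
\E\norm{\sum\nolimits_k \varepsilon_k \mat z_k}_2 \le \bigl(\E\norm{\sum\nolimits_k \varepsilon_k \mat z_k}_2^2\bigr)^{1/2},
\]
which after expanding and cancelling cross terms yields $(\E\frobnorm{\mat Z}^2)^{1/2}$. But Jensen's inequality gives $\E\frobnorm{\mat Z}\le(\E\frobnorm{\mat Z}^2)^{1/2}$, not the reverse, so no ``further Jensen step'' can take you from this $L_2$ quantity down to $\E\frobnorm{\mat Z}$. Your bound on $F$ is therefore strictly weaker than the one claimed, and the argument as written does not prove the theorem.

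The fix is to apply Jensen only to the Rademacher expectation, conditioning on $\mat Z$:
\[
\E_{\mat Z}\,\E_{\boldsymbol\varepsilon}\Bigl(\sum\nolimits_j\bigl|\sum\nolimits_k \varepsilon_k Z_{jk}\bigr|^2\Bigr)^{1/2}
\le \E_{\mat Z}\Bigl(\sum\nolimits_j \E_{\boldsymbol\varepsilon}\bigl|\sum\nolimits_k \varepsilon_k Z_{jk}\bigr|^2\Bigr)^{1/2}
= \E_{\mat Z}\Bigl(\sum\nolimits_{j,k} Z_{jk}^2\Bigr)^{1/2}
= \E\frobnorm{\mat Z},
\]
using concavity of $\sqrt{\cdot}$. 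This is exactly how the paper proceeds.
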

\begin{proof}
Apply Theorem \ref{thm:inftopbound} to get 
\begin{equation}
\begin{aligned}
\E\norm{\mat{Z}}_{\infty\rightarrow2} & \leq2\E\norm{\sum\nolimits_{k}\varepsilon_{k}\mat{z}_{k}}_{2}+2\supovertball\E\sum\nolimits_{k}\left|\sum\nolimits_{j}\varepsilon_{j}Z_{jk}u_{j}\right| \\
	& \eqqcolon F + S.
\end{aligned}
\label{eqn:inf2boundraw}
\end{equation}
Expand the first term, and use Jensen's inequality to move the expectation with respect to the Rademacher variables inside the square root: 
\[
F = 2\E\left(\sum\nolimits_{j}\left|\sum\nolimits_{k}\varepsilon_{k}Z_{jk}\right|^{2}\right)^{1/2}\leq2\E_{\mat{Z}}\left(\sum\nolimits_{j}\E_{\boldsymbol{\varepsilon}}\left|\sum\nolimits_{k}\varepsilon_{k}Z_{jk}\right|^{2}\right)^{1/2}.
\]
The independence of the Rademacher variables implies that the cross terms cancel, so 
\[
F \leq2\E\left(\sum\nolimits_{j}\sum\nolimits_{k}Z_{jk}^{2}\right)^{1/2}=2\E\frobnorm{\mat{Z}}.
\]

We use the Cauchy--Schwarz inequality to replace the $\ell_{1}$ norm with an $\ell_{2}$ norm in the second term of \eqref{eqn:inf2boundraw}. A direct application would introduce a possibly suboptimal factor of $\sqrt{n}$ (where $n$ is the number of columns in $\mat{Z}$), so instead we choose $d_{k}>0$ such that $\sum_{k}d_{k}^{2}=1$ and use the corresponding weighted $\ell_{2}$ norm: 
\[
S = 2\supovertball\E\sum\nolimits_{k}\frac{\left|\sum\nolimits_{j}\varepsilon_{j}Z_{jk}u_{j}\right|}{d_{k}}d_{k}\leq2\supovertball\E\left(\sum\nolimits_{k}\frac{\left|\sum\nolimits_{j}\varepsilon_{j}Z_{jk}u_{j}\right|^{2}}{d_{k}^{2}}\right)^{1/2}.
\]
Move the expectation with respect to the Rademacher variables inside the square root and observe that the cross terms cancel:
\[
S\leq2\supovertball\E_{\mat{Z}}\left(\sum\nolimits_{k}\frac{\E_{\boldsymbol{\varepsilon}}\left|\sum\nolimits_{j}\varepsilon_{j}Z_{jk}u_{j}\right|^{2}}{d_{k}^{2}}\right)^{1/2}=2\supovertball\E\left(\sum\nolimits_{j,k}\frac{Z_{jk}^{2}u_{j}^{2}}{d_{k}^{2}}\right)^{1/2}.
\]
Use Jensen's inequality to pass the maximum through the expectation, and note that if $\norm{\mat{u}}_{2}=1$ then the vector formed by elementwise squaring $\mat{u}$ lies on the $\ell_{1}$ unit ball, thus 
\[
S \leq2\E\left(\max_{\norm{\mat{u}}_{1}=1}\sum\nolimits_{j,k}\left(\frac{Z_{jk}}{d_{k}}\right)^{2}u_{j}\right)^{1/2}.
\]
Clearly this maximum is achieved when $\mat{u}$ is chosen so $u_{j}=1$ at an index $j$ for which $\left(\sum\nolimits_{k}\left(\frac{Z_{jk}}{d_{k}}\right)^{2}\right)^{1/2}$ is maximal and $u_{j}=0$ otherwise. Consequently, the maximum is the largest of the $\ell_{2}$ norms of the rows of $\mat{Z}\mat{D}^{-1}$, where $\mat{D}=\text{diag}(d_{1},\ldots,d_{n})$. Recall that this quantity is, by definition, $\twoinfnorm{\mat{Z}\mat{D}^{-1}}.$ Therefore $S \leq2\E\twoinfnorm{\mat{Z}\mat{D}^{-1}}$. The theorem follows by optimizing our choice of $\mat{D}$ and introducing our estimates for $F$ and $S$ into \eqref{eqn:inf2boundraw}. 
\end{proof}

Taking $\mat{Z}=\mat{A}-\mat{X}$ in Theorem \ref{thm:inf2errbound}, we have  
\begin{equation}
\E\inftnorm{\mat{A}-\mat{X}}\leq2\E\left(\sum\nolimits_{j,k}(X_{jk}-a_{jk})^{2}\right)^{1/2}+2\min_{\mat{D}}\E\max_{j}\left(\sum\nolimits_{k}\frac{(X_{jk}-a_{jk})^{2}}{d_{k}^{2}}\right)^{1/2}.
\label{eqn:inf2errboundeq}
\end{equation}

We now derive a bound which depends only on the variances of the $X_{jk}$.
\begin{cor}
Fix the $m\times n$ matrix $\mat{A}$ and let $\mat{X}$ be a random matrix with independent entries so that $\E X=\mat{A}$. Then
\[
\E\inftnorm{\mat{A}-\mat{X}}\leq2\left(\sum\nolimits_{j,k}\var{X_{jk}}\right)^{1/2}+2\sqrt{m}\min_{\mat{D}}\max_{j}\left(\sum\nolimits_{k}\frac{\var{X_{jk}}}{d_{k}^{2}}\right)^{1/2}
\]
where $\mat{D}$ is a positive diagonal matrix with $\trace(\mat{D}^{2})=1$.
\label{cor:inf2errbound}
\end{cor}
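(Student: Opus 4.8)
\textbf{Proof proposal for Corollary \ref{cor:inf2errbound}.}
The plan is to start from the inequality \eqref{eqn:inf2errboundeq} obtained by specializing Theorem \ref{thm:inf2errbound} to $\mat{Z} = \mat{A}-\mat{X}$, and to bound each of the two expectations on its right-hand side separately by a deterministic quantity involving only the variances $\var{X_{jk}} = \E(X_{jk}-a_{jk})^2$. Throughout I will use that $\mat{X}$ has independent entries with $\E X_{jk} = a_{jk}$, so that $\E(X_{jk}-a_{jk})^2 = \var{X_{jk}}$.

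For the first term, the integrand $(\sum_{j,k}(X_{jk}-a_{jk})^2)^{1/2}$ is a concave function (square root) of the nonnegative random variable $\sum_{j,k}(X_{jk}-a_{jk})^2$, so Jensen's inequality gives
\[
\E\left(\sum\nolimits_{j,k}(X_{jk}-a_{jk})^{2}\right)^{1/2}\leq\left(\E\sum\nolimits_{j,k}(X_{jk}-a_{jk})^{2}\right)^{1/2}=\left(\sum\nolimits_{j,k}\var{X_{jk}}\right)^{1/2},
\]
which is exactly half the first term in the claimed bound. For the second term, I first fix an arbitrary admissible diagonal matrix $\mat{D}$ (positive, $\trace(\mat{D}^2)=1$) and bound $\E\max_j(\sum_k (X_{jk}-a_{jk})^2/d_k^2)^{1/2}$. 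The obstacle here is that the maximum over $j$ sits outside the square root, and Jensen pushes the wrong way on $\max$; the standard fix is to dominate the maximum of $m$ nonnegative terms by their sum --- i.e. $\max_j y_j \le \sum_j y_j$ for $y_j\ge 0$ --- but applied naively inside the root this would cost a full factor of $m$ in the wrong place. Instead I bound $\max_j (\cdot)^{1/2} = (\max_j (\cdot))^{1/2} \le (\sum_j (\cdot))^{1/2}$ only after taking the expectation is not available either, so the cleaner route is: move the square root out first via $\max_j \sqrt{c_j} = \sqrt{\max_j c_j}$, then use $\E\sqrt{\max_j c_j} \le \sqrt{\E \max_j c_j} \le \sqrt{\sum_j \E c_j}$ by Jensen (concavity of $\sqrt{\cdot}$) and then the union-type bound on the expectation of the max. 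With $c_j = \sum_k (X_{jk}-a_{jk})^2/d_k^2$ this yields
\[
\E\max_{j}\left(\sum\nolimits_{k}\frac{(X_{jk}-a_{jk})^{2}}{d_{k}^{2}}\right)^{1/2}\leq\left(\sum\nolimits_{j}\sum\nolimits_{k}\frac{\var{X_{jk}}}{d_{k}^{2}}\right)^{1/2}\leq\sqrt{m}\,\max_{j}\left(\sum\nolimits_{k}\frac{\var{X_{jk}}}{d_{k}^{2}}\right)^{1/2},
\]
where the last step simply bounds the sum of $m$ nonnegative numbers by $m$ times the largest.

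Finally, since the chain of inequalities above holds for every admissible $\mat{D}$, it holds in particular for the $\mat{D}$ that minimizes the right-hand side; taking the infimum over $\mat{D}$ on both sides (noting that $\min_{\mat{D}}$ of the right side is attained or can be approached, and the left side of \eqref{eqn:inf2errboundeq} already carries $\min_{\mat{D}}$) and combining with the bound on the first term produces
\[
\E\inftnorm{\mat{A}-\mat{X}}\leq2\left(\sum\nolimits_{j,k}\var{X_{jk}}\right)^{1/2}+2\sqrt{m}\min_{\mat{D}}\max_{j}\left(\sum\nolimits_{k}\frac{\var{X_{jk}}}{d_{k}^{2}}\right)^{1/2},
\]
which is the assertion of the corollary. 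The only genuinely delicate point is making sure the $\sqrt{m}$ factor is introduced in the right place --- it should multiply the minimax term and not the Frobenius term --- and that the $\min_{\mat{D}}$ can be pulled outside the expectation legitimately, which is fine because each fixed $\mat{D}$ gives a valid upper bound for $\E\inftnorm{\mat{A}-\mat{X}}$ and we are free to optimize afterward.
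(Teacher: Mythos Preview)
Your proposal is correct and follows essentially the same route as the paper: Jensen's inequality (concavity of $\sqrt{\cdot}$) handles the Frobenius term, and for the second term you push the expectation inside the square root via Jensen, replace the maximum by a sum, and then bound the sum of $m$ nonnegative summands by $m$ times the largest. The only cosmetic difference is that the paper carries $\min_{\mat{D}}$ through each step while you fix $\mat{D}$ and optimize at the end, which is equivalent.
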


\begin{proof}
Let $F$ and $S$ denote, respectively, the first and second term of \eqref{eqn:inf2errboundeq}. An application of Jensen's inequality shows that $F \leq2\left(\sum\nolimits_{j,k}\var{X_{jk}}\right)^{1/2}$.
A second application shows that 
\[
S \leq2\min_{\mat{D}}\left(\E\max_{j}\sum\nolimits_{k}\frac{(X_{jk}-a_{jk})^{2}}{d_{k}^{2}}\right)^{1/2}.
\]
Bound the maximum with a sum: 
\[
S \leq2\min_{\mat{D}}\left(\sum\nolimits_{j}\E\sum\nolimits_{k}\frac{(X_{jk}-a_{jk})^{2}}{d_{k}^{2}}\right)^{1/2}.
\]
The sum is controlled by a multiple of its largest term, so
\[
S \leq2\sqrt{m}\min_{\mat{D}}\left(\max_{j}\sum\nolimits_{k}\frac{\var{X_{jk}}}{d_{k}^{2}}\right)^{1/2},
\]
where $m$ is the number of rows of $\mat{A}.$
\end{proof}

\subsection{Optimality}

We now show that Theorem \ref{thm:inf2errbound} gives an optimal bound, in the sense that each of its terms is necessary. In the following, we reserve the letter $\mat{D}$ for a positive diagonal matrix with $\trace(\mat{D}^{2})=1.$

First, we establish the necessity of the Frobenius term by identifying a class of random matrices whose $\inftwo$ norms are larger than their weighted $\twoinf$ norms but comparable to their Frobenius norms. Let $\mat{Z}$ be a random $m\times\sqrt{m}$ matrix such that the entries in the first column of $\mat{Z}$ are equally likely to be positive or negative ones, and all other entries are zero. With this choice, $\E\inftnorm{\mat{Z}}=\E\frobnorm{\mat{Z}}=\sqrt{m}$. Meanwhile, $\E\twoinfnorm{\mat{Z}\mat{D}^{-1}}=\tfrac{1}{d_{11}}$, so $\min_{\mat{D}}\E\twoinfnorm{\mat{Z}\mat{D}^{-1}}=1$, which is much smaller than $\E\inftnorm{\mat{Z}}$. Clearly, the Frobenius term is necessary. 

Similarly, to establish the necessity of the weighted $\twoinf$ norm term, we consider a class of matrices whose $\inftwo$ norms are larger than their Frobenius norms but comparable to their weighted $\twoinf$ norms. Consider a $\sqrt{n}\times n$ matrix $\mat{Z}$ whose entries are all equally likely to be positive or negative ones. It is a simple task to confirm that $\E\inftnorm{\mat{Z}}\geq n$ and $\E\frobnorm{\mat{Z}}=n^{3/4}$; it follows that the weighted $\twoinf$ norm term is necessary. In fact, 
\[
\min_{\mat{D}}\E\twoinfnorm{\mat{Z}\mat{D}^{-1}}=\min_{\mat{D}}\E\max_{j=1,\ldots,\sqrt{n}}\left(\sum\nolimits_{k=1}^{n}\frac{Z_{jk}^{2}}{d_{kk}^{2}}\right)^{1/2}=\min_{\mat{D}}\left(\sum\nolimits_{k=1}^{n}\frac{1}{d_{kk}^{2}}\right)^{1/2}=n,
\]
so we see that $\E\inftnorm{\mat{Z}}$ and the weighted $\twoinf$ norm term are comparable.

\subsection{Example application}

From Theorem \ref{thm:inf2errbound} we infer that a good scheme for sparsifying a matrix $\mat{A}$ while minimizing the expected relative $\inftwo$ norm error is one which drastically increases the sparsity of $\mat{X}$ while keeping the relative error  
\[
\frac{\E\frobnorm{\mat{Z}}+\min_{\mat{D}}\E\twoinfnorm{\mat{Z}\mat{D}^{-1}}}{\inftnorm{\mat{A}}}
\]
small, where $\mat{Z} = \mat{A} - \mat{X}.$

As before, consider the case where $\mat{A}$ is an $n\times n$ matrix all of whose entries are positive and in an interval bounded away from zero. Let $\gamma$ be a desired bound on the expected relative $\inftwo$ norm error. We choose the randomization strategy $X_{jk}\sim\frac{a_{jk}}{p}\text{ Bern}(p)$ and ask how much can we sparsify while respecting our bound on the relative error. That is, how small can $p$ be? We appeal to Theorem \ref{thm:inf2errbound}. In this case, 
\[
\inftnorm{\mat{A}}=\left(\sum\nolimits_{j}\sum\nolimits_{k}a_{jk}^{2}+2\sum\nolimits_{j}\sum\nolimits_{\ell<m}a_{j\ell}a_{jm}\right)^{\frac{1}{2}}=\asymO{\left(n^{2}+n^{2}(n-1)\right)^{\frac{1}{2}}}.
\]
By Jensen's inequality, 
\[
\E\frobnorm{\mat{Z}} \leq \E\frobnorm{\mat{A}} + \E\frobnorm{\mat{X}}\leq \left(1 + \frac{1}{\sqrt{p}}\right)\frobnorm{\mat{A}} = \asymO{n \left(1 + \frac{1}{\sqrt{p}}\right)}.
\]
We bound the other term in the numerator, also using Jensen's inequality: 
\[
\min_{\mat{D}} \E\twoinfnorm{\mat{Z}\mat{D}^{-1}} \leq \sqrt{n}\E\twoinfnorm{\mat{Z}} \leq \sqrt{n}\left(1 + \frac{1}{\sqrt{p}}\right)\twoinfnorm{\mat{A}} = \asymO{n \left(1 + \frac{1}{\sqrt{p}}\right) } 
\]
to get 
\[
\frac{\E\frobnorm{\mat{Z}} + \min_{\mat{D}} \E\twoinfnorm{\mat{Z}\mat{D}^{-1}} }{\inftnorm{\mat{A}}} = \asymO{\frac{1}{\sqrt{n}} + \frac{1}{\sqrt{pn}}} = \asymO{\frac{1}{\sqrt{pn}}}
\]
We conclude that, for this class of matrices and this family of sparsification schemes, we can reduce the number of expected nonzero terms to $\asymO{\frac{n}{\gamma^{2}}}$ while maintaining an expected $\inftwo$ norm relative error of $\gamma$.

\section{Spectral error bound}

\label{sec:spectralerrbound}

In this section we establish a bound on $\E\norm{\mat{A}-\mat{X}}$ as an immediate consequence of Lata\l{}a's result \cite{Lat04}. We then derive a deviation inequality for the spectral approximation error using a log-Sobolev inequality from \cite{BLM03}, and use it to compare our results to those of Achlioptas and McSherry \cite{AM07} and Arora, Hazan, and Kale \cite{AHK06}.

\begin{thm}
Suppose $\mat{A}$ is a fixed matrix, and let $\mat{X}$ be a random matrix with independent entries for which $\E X=\mat{A}$. Then 
\[
\E\norm{\mat{A}-\mat{X}}\leq\mathrm{C}\left[\max_{j}\left(\sum\nolimits_{k}\var{X_{jk}}\right)^{1/2}+\max_{k}\left(\sum\nolimits_{j}\var{X_{jk}}\right)^{1/2}+\left(\sum\nolimits_{jk}\E(X_{jk}-a_{jk})^{4}\right)^{1/4}\right]
\]
where $\mathrm{C}$ is a universal constant.
\label{thm:spectralbound}
\end{thm}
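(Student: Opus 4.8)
The plan is to obtain Theorem \ref{thm:spectralbound} as an immediate corollary of Latała's estimate for the spectral norm of a random matrix with independent, centered entries \cite{Lat04}. Recall the relevant form of that result: if $\mat{Y} = (Y_{jk})$ has independent entries with $\E Y_{jk} = 0$, then
\[
\E\norm{\mat{Y}} \leq \mathrm{C}\left[\max_j\left(\sum\nolimits_k \E Y_{jk}^2\right)^{1/2} + \max_k\left(\sum\nolimits_j \E Y_{jk}^2\right)^{1/2} + \left(\sum\nolimits_{jk} \E Y_{jk}^4\right)^{1/4}\right]
\]
for a universal constant $\mathrm{C}$. If the cited statement is phrased for square matrices, the rectangular case follows at once by padding $\mat{Y}$ with rows or columns of zeros, an operation that changes neither the spectral norm nor any of the three quantities on the right-hand side.

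First I would set $\mat{Y} = \mat{A} - \mat{X}$, that is $Y_{jk} = a_{jk} - X_{jk}$. Since the entries of $\mat{X}$ are independent and $\E X_{jk} = a_{jk}$, the variables $Y_{jk}$ are independent and have mean zero, so Latała's hypotheses are satisfied. It then remains only to translate the moments appearing in his bound into the language of the statement: $\E Y_{jk}^2 = \E(X_{jk} - a_{jk})^2 = \var{X_{jk}}$ because $\E X_{jk} = a_{jk}$, while $\E Y_{jk}^4 = \E(X_{jk} - a_{jk})^4$ by definition. Substituting these identities and $\norm{\mat{Y}} = \norm{\mat{A} - \mat{X}}$ into the displayed inequality yields exactly the claimed bound.

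There is no genuine obstacle here—the entire content of the theorem is the invocation of \cite{Lat04}, and the only thing to verify is the routine bookkeeping converting Latała's second- and fourth-moment quantities into variances and fourth central moments. The single point deserving a sentence is confirming that the cited form of the inequality applies to (possibly) non-square matrices; the zero-padding observation above closes that gap if needed. Any further discussion—for instance, noting that a lone large entry forces the presence of the fourth-moment term, so that all three terms are necessary—is commentary rather than part of the proof.
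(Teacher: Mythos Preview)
Your proposal is correct and matches the paper's own argument essentially verbatim: the paper simply states Lata\l{}a's bound for a zero-mean independent-entry matrix $\mat{Z}$ and observes that taking $\mat{Z}=\mat{A}-\mat{X}$ gives the theorem. Your additional remark about zero-padding to handle the rectangular case is a harmless extra detail the paper omits.
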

In \cite{Lat04}, Lata\l{}a considered the spectral norm of random matrices with independent, zero-mean entries, and he showed that, for any such matrix $\mat{Z}$, 
\[
\E\Vert\mat{Z}\Vert\leq\mathrm{C}\left[\max_{j}\left(\sum\nolimits_{k}\E Z_{jk}^{2}\right)^{1/2}+\max_{k}\left(\sum\nolimits_{j}\E Z_{jk}^{2}\right)^{1/2}+\left(\sum\nolimits_{jk}\E Z_{jk}^{4}\right)^{1/4}\right],
\]
where $\mathrm{C}$ is some universal constant. Unfortunately, no estimate for $\mathrm{C}$ is available. Theorem \ref{thm:spectralbound} follows from Lata\l{}a's result, by taking $\mat{Z}=\mat{A}-\mat{X}$.

The bounded differences argument from Section \ref{sec:inftopbound} establishes the correct (subgaussian) tail behavior of $\mathbb{E}\norm{\mat{A}-\mat{X}}$.

\begin{thm}
Fix the matrix $\mat{A}$, and let $\mat{X}$ be a random matrix with independent entries for which $\E X = \mat{A}$. Assume $\left|X_{jk}\right|\leq D/2$ almost surely for all $j,k$. Then, for all $t>0$,
\[
\Prob{\norm{\mat{A}-\mat{X}}>\E\norm{\mat{A} - \mat{X}}+t}\leq \e^{-t^{2}/(4D^{2})}.
\]
\label{thm:normconcentration}
\end{thm}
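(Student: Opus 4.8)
The plan is to reuse the bounded differences machinery behind Theorem \ref{thm:inftopnormdevbnd}, applying Proposition \ref{prop:logsob} to the functional $W = \norm{\mat{Z}}$ with $\mat{Z} = \mat{A} - \mat{X}$. Since $\mat{X}$ has independent entries, $W$ is a function of the independent variables $\{X_{jk}\}$; I would let $W^{jk}$ denote the value obtained by replacing $X_{jk}$ with an independent copy $X_{jk}^\prime$, so that the corresponding error matrix $\mat{Z}^{jk}$ agrees with $\mat{Z}$ except in the $(j,k)$ entry, where $\mat{Z} - \mat{Z}^{jk}$ equals $X_{jk}^\prime - X_{jk}$. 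The job is then to verify the bounded differences hypothesis with the constant $C = D^2$.

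First I would fix unit vectors $\mat{u},\mat{v}$ that are the leading left and right singular vectors of $\mat{Z}$, so that $W = \mat{u}^T\mat{Z}\mat{v}$ with $\norm{\mat{u}}_2 = \norm{\mat{v}}_2 = 1$. Because $W^{jk} = \norm{\mat{Z}^{jk}} \geq \mat{u}^T\mat{Z}^{jk}\mat{v}$ for these particular vectors, we obtain the one-sided estimate
\[
W - W^{jk} \leq \mat{u}^T\bigl(\mat{Z} - \mat{Z}^{jk}\bigr)\mat{v} = (X_{jk}^\prime - X_{jk})\,u_j v_k,
\]
and the almost-sure bound $|X_{jk}| \leq D/2$ gives $|X_{jk}^\prime - X_{jk}| \leq D$, hence $(W - W^{jk})\mathbb{1}_{W > W^{jk}} \leq D|u_j v_k|$. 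Squaring and summing, and using that $\mat{u}$ and $\mat{v}$ are Euclidean unit vectors,
\[
\sum\nolimits_{j,k}(W - W^{jk})^2\,\mathbb{1}_{W > W^{jk}} \leq D^2\sum\nolimits_{j,k} u_j^2 v_k^2 = D^2\,\norm{\mat{u}}_2^2\,\norm{\mat{v}}_2^2 = D^2.
\]
Then Proposition \ref{prop:logsob} with $C = D^2$ yields $\Prob{W > \E W + t} \leq \e^{-t^2/(4D^2)}$, which is exactly the claim.

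I do not expect any real obstacle here; the one point to watch is the same as in the proof of Theorem \ref{thm:inftopnormdevbnd}, namely that the singular vectors must be taken optimal for $\mat{Z}$ (the matrix giving the \emph{larger} norm on the event $W > W^{jk}$) rather than for $\mat{Z}^{jk}$, which is what legitimizes the inequality $W - W^{jk} \leq \mat{u}^T(\mat{Z} - \mat{Z}^{jk})\mat{v}$. It is worth remarking why the exponent is cleaner than in Theorem \ref{thm:inftopnormdevbnd}, with no factor of $nm^{s}$: the spectral norm is a $2\!\rightarrow\!2$ operator norm, so the dual vector $\mat{u}$ already lies in the Euclidean unit ball and no lossy conversion $\norm{\mat{u}}_q \to \norm{\mat{u}}_2$ is incurred.
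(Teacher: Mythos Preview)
Your proposal is correct and matches the paper's own proof, which simply says ``The proof is exactly that of Theorem \ref{thm:inftopnormdevbnd}, except now $\mat{u}$ and $\mat{v}$ are both in the $\ell_2$ unit sphere.'' Your added remark explaining why the $nm^{s}$ factor disappears is exactly the point behind that one-line reduction.
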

\begin{proof}
The proof is exactly that of Theorem \ref{thm:inftopnormdevbnd}, except now $\mat{u}$ and $\mat{v}$ are both in the $\ell_2$ unit sphere.
\end{proof}

We find it convenient to measure deviations on the scale of the mean.
\begin{cor}
Under the conditions of Theorem \ref{thm:normconcentration}, for all $\delta>0$, 
\[
\Prob{\norm{\mat{A}-\mat{X}}>(1+\delta)\mathbb{E}\norm{\mat{A}-\mat{X}}}\leq \e^{-\delta^{2}\left(\mathbb{E}\norm{\mat{A}-\mat{X}}\right)^{2}/(4D^{2})}.
\]
\label{cor:devbnd}
\end{cor}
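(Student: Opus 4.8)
The plan is to obtain this statement directly from Theorem \ref{thm:normconcentration} by a change of the deviation parameter, in exact parallel with the way Corollary \ref{cor:inftopreldevbnd} is derived from Theorem \ref{thm:inftopnormdevbnd}. Since the entries $X_{jk}$ are bounded by $D/2$ almost surely, the quantity $\E\norm{\mat{A}-\mat{X}}$ is a finite, fixed nonnegative real number, so it makes sense to scale $t$ by it.

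First I would fix $\delta>0$ and set $t=\delta\,\E\norm{\mat{A}-\mat{X}}$ in Theorem \ref{thm:normconcentration}. With this choice, $(1+\delta)\E\norm{\mat{A}-\mat{X}} = \E\norm{\mat{A}-\mat{X}} + t$, so the event $\bigl\{\norm{\mat{A}-\mat{X}}>(1+\delta)\E\norm{\mat{A}-\mat{X}}\bigr\}$ is literally the same event as $\bigl\{\norm{\mat{A}-\mat{X}}>\E\norm{\mat{A}-\mat{X}}+t\bigr\}$. Applying the tail bound of Theorem \ref{thm:normconcentration} to the latter gives probability at most $\e^{-t^{2}/(4D^{2})}$, and substituting back $t=\delta\,\E\norm{\mat{A}-\mat{X}}$ yields the claimed bound $\e^{-\delta^{2}(\E\norm{\mat{A}-\mat{X}})^{2}/(4D^{2})}$.

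There is essentially no obstacle here; the only degenerate case worth a remark is $\E\norm{\mat{A}-\mat{X}}=0$, which forces $\mat{X}=\mat{A}$ almost surely, making the left-hand probability equal to $0$ and the right-hand side equal to $1$, so the inequality holds trivially. In every other case the argument is just the substitution described above, and no new estimate is needed.
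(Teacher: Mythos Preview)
Your proposal is correct and matches the paper's own approach exactly: the corollary is obtained simply by setting $t=\delta\,\E\norm{\mat{A}-\mat{X}}$ in Theorem~\ref{thm:normconcentration}, in direct analogy with the derivation of Corollary~\ref{cor:inftopreldevbnd} from Theorem~\ref{thm:inftopnormdevbnd}. Your remark on the degenerate case $\E\norm{\mat{A}-\mat{X}}=0$ is a harmless extra observation the paper does not bother to make.
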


\subsection{Comparison with previous results}

To demonstrate the applicability of our bound on the spectral norm error, we consider the sparsification and quantization schemes used by Achlioptas and McSherry \cite{AM07}, and the quantization scheme proposed by Arora, Hazan, and Kale \cite{AHK06}. We show that our spectral norm error bound and the associated concentration result give results of the same order, with less effort. Throughout these comparisons, we take $\mat{A}$ to be a $m\times n$ matrix, with $m<n$, and we define $b=\max_{jk}|a_{jk}|$.

\subsubsection{A matrix quantization scheme}

First we consider the scheme proposed by Achlioptas and McSherry for
quantization of the matrix entries: 
\[
X_{jk}=\begin{cases}
b & \text{ with probability }\frac{1}{2}+\frac{a_{jk}}{2b}\\
-b & \text{ with probability }\frac{1}{2}-\frac{a_{jk}}{2b}\end{cases}.
\]
With this choice $\var{X_{jk}}=b^{2}-a_{jk}^{2}\leq b^{2}$, and $\E(X_{jk}-a_{jk})^{4}=b^{2}-3a^{4}+2a^{2}b^{2}\leq3b^{4}$, so the expected spectral error satisfies 
\[
\E\Vert\mat{A}-\mat{X}\Vert\leq \mathrm{C}(\sqrt{n}b+\sqrt{m}b+b\sqrt[4]{3mn})\leq4\mathrm{C}b\sqrt{n}.
\]
Applying Corollary \ref{cor:devbnd}, we find that the error satisfies 
\[
\Prob{\norm{\mat{A}-\mat{X}}>4\mathrm{C}b\sqrt{n}(1 + \delta)}\leq \e^{-\delta^{2}\mathrm{C}^{2}n}.
\]
In particular, with probability at least $1-\exp(-\mathrm{C}^{2}n)$,
\[
\norm{\mat{A}-\mat{X}}\leq8\mathrm{C}b\sqrt{n}.
\]
Achlioptas and McSherry proved that for $n\geq n_{0}$, where $n_{0}$ is on the order of $10^{9}$, with probability at least $1-\exp(-19(\log n)^{4})$,
\[
\norm{\mat{A}-\mat{X}}<4b\sqrt{n}.
\]
Thus, Theorem \ref{thm:normconcentration} provides a bound of the same order in $n$ which holds with higher probability and over a larger range of $n$.

\subsubsection{A nonuniform sparsification scheme}
Next we consider an analog to the nonuniform sparsification scheme proposed in the same paper. Fix a number $p$ in the range $(0,1)$ and sparsify entries with probabilities proportional to their magnitudes: 
\[
X_{jk}\sim \frac{a_{jk}}{p_{jk}}\text{ Bern}(p_{jk}), \text{ where } p_{jk}=\max\left\{ p\left(\frac{a_{jk}}{b}\right)^{2},\sqrt{p\left(\frac{a_{jk}}{b}\right)^{2}\times(8\log n)^{4}/n}\right\}.
\]
Achlioptas and McSherry determine that, with probability at least $1 - \exp(-19(\log n)^4)$,
\[
\norm{\mat{A}-\mat{X}}<4b\sqrt{n/p}.
\]
Further, the expected number of nonzero entries in $\mat{X}$ is less than
\begin{equation}
pmn\times\text{Avg}[(a_{jk}/b)^{2}]+m(8\log n)^{4}.
\end{equation}

Their choice of $p_{jk}$, in particular the insertion of the $(8\log n)^{4}/n$ factor, is an artifact of their method of proof. Instead, we consider a scheme which compares the magnitudes of $a_{jk}$ and $b$ to determine $p_{jk}$. Introduce the quantity $R=\max_{a_{jk}\neq0}b/|a_{jk}|$ to
measure the spread of the entries in $\mat{A}$, and take 
\[
X_{jk} \sim 
\begin{cases}
\frac{a_{jk}}{p_{jk}}\text{ Bern}(p_{jk}),\text{ where }p_{jk}=\frac{pa_{jk}^{2}}{pa_{jk}^{2}+b^{2}}, & a_{jk}\neq0\\
0, & a_{jk}=0.
\end{cases}
\]
With this scheme, $\var{X_{jk}}=0$ when $a_{jk}=0$, otherwise $\var{X_{jk}}=b^{2}/p$.
Likewise, $\E(X_{jk}-a_{jk})^{4}=0$ if $a_{jk}=0$, otherwise 
\[
\E(X_{jk}-a_{jk})^{4}\leq\var{X_{jk}}\norm{X_{jk}-a_{jk}}_{\infty}^{2}=\frac{b^{2}}{p}\max\left\{|a_{jk}|,|a_{jk}|\left(\frac{pa_{jk}^{2}+b^{2}}{pa_{jk}^{2}}-1\right)\right\}^{2}\leq\frac{b^{4}}{p^{2}}R^{2},
\]
so \[
\E\norm{\mat{A}-\mat{X}}\leq \mathrm{C}\left(b\sqrt{\frac{n}{p}}+b\sqrt{\frac{m}{p}}+b\sqrt{\frac{R}{p}}\sqrt[4]{mn}\right)\leq \mathrm{C}(2+\sqrt{R})b\sqrt{\frac{n}{p}}.
\]
Applying Corollary \ref{cor:devbnd}, we find that 
with probability at least $1-\exp(-\mathrm{C}^{2}(2+\sqrt{R})^{2}pn/16)$,
\[
\norm{\mat{A}-\mat{X}}\leq2\mathrm{C}(2+\sqrt{R})b\sqrt{\frac{n}{p}}.
\]
Thus, Theorem \ref{thm:spectralbound} and Achlioptas and McSherry's scheme-specific analysis yield results of the same order in $n$ and $p$. As before, we see that our bound holds with higher probability and over a larger range of $n$. Furthermore, since the expected number of nonzero entries in $\mat{X}$ satisfies 
\[
\sum\nolimits_{jk}p_{jk} = \sum\nolimits_{jk} \frac{pa_{jk}^2}{pa_{jk}^2 + b^2}\leq pnm\times \text{Avg}\left[\left(\frac{a_{jk}}{b}\right)^2\right],
\]
we have established a smaller limit on the expected number of nonzero entries.  

\subsubsection{A scheme which simultaneously sparsifies and quantizes}

Finally, we use Theorem \ref{thm:normconcentration} to estimate the error in using the scheme from \cite{AHK06} which simultaneously quantizes and sparsifies. Fix $\delta>0$ and consider 
\[
X_{jk}=\begin{cases}
\sgn(a_{jk})\frac{\delta}{\sqrt{n}}\text{ Bern}\left(\frac{|a_{jk}|\sqrt{n}}{\delta}\right), & |a_{jk}|\leq\frac{\delta}{\sqrt{n}}\\
a_{jk}, & \text{ otherwise}.\end{cases}
\]
Then $\var{X_{jk}}=0$ if $|a_{jk}|\geq\delta/\sqrt{n}$,
otherwise 
\[
\var{X_{jk}}=|a_{jk}|^{3}\frac{\sqrt{n}}{\delta}-2a_{jk}^{2}+|a_{jk}|\frac{\delta}{\sqrt{n}}\leq\frac{\delta^{2}}{n}.
\]
Also the fourth moment term is zero for large enough $a_{jk}$, otherwise
\[
\E(X_{jk}-a_{jk})^{4}=|a_{jk}|^{5}\frac{\sqrt{n}}{\delta}-4a_{jk}^{4}+6|a_{jk}|^{3}\frac{\delta}{\sqrt{n}}-4a_{jk}^{2}\frac{\delta^{2}}{n}+|a_{jk}|\left(\frac{\delta}{\sqrt{n}}\right)^{3}\leq8\frac{\delta^{4}}{n^{2}}.
\]
This gives the estimates 
\[
\E\norm{\mat{A}-\mat{X}}\leq C\left(\sqrt{n}\frac{\delta}{\sqrt{n}}+\sqrt{m}\frac{\delta}{\sqrt{n}}+2\frac{\delta}{\sqrt{n}}\sqrt[4]{mn}\right)\leq4C\delta
\]
and 
\[
\Prob{\norm{\mat{A}-\mat{X}}>4C\delta(\gamma+1)}\leq \e^{-\gamma^{2}C^{2}n}.
\]
Taking $\gamma=1$, we see that with probability at least $1-\exp(-C^{2}n),$
\[
\norm{\mat{A}-\mat{X}}\leq8C\delta.
\]
Let $S=\sum\nolimits_{j,k}|A_{jk}|$, then appealing to Lemma 1 in \cite{AHK06}, we find that $\mat{X}$ has $\asymO{\tfrac{\sqrt{n} S}{\gamma}}$ nonzero entries with probability at least $1-\exp\left(-\Omega\left(\tfrac{\sqrt{n}S}{\gamma}\right)\right)$.

Arora, Hazan, and Kale establish that this scheme guarantees $\norm{\mat{A}-\mat{X}}\leq\asymO{\delta}$ with probability at least $1-\exp(-\Omega(n))$, so we see that our general bound recovers a bound of the same order.

In conclusion, we see that the bound on expected spectral error in Theorem \ref{thm:normconcentration} in conjunction with the deviation result in Corollary \ref{cor:devbnd} provide guarantees comparable to those derived with scheme-specific analyses. We anticipate that the flexibility demonstrated here will make these useful tools for analyzing and guiding the design of novel sparsification schemes.

\nocite{Seg00} \bibliographystyle{amsalpha}
\bibliography{matrix_sparsification_errorbounds}

\end{document}